\newtheorem{theorem}{Theorem}[section]
\newtheorem{lemma}[theorem]{Lemma}
\newtheorem{corollary}[theorem]{Corollary}
\newtheorem{definition}[theorem]{Definition}
\newtheorem{claim}[theorem]{Claim}
\begin{document}

\title{An Interpolating Curvature Condition Preserved By Ricci Flow}

\author{Xiang Gao}


\address{Department of Mathematics,~East China Normal University,
~Lane 500, DongChuan Road,~Minhang District,~Shanghai
City,~200241,~People's Republic of China}

\address{Department of Mathematics,~Ocean University of China,
~Lane 238, SongLing Road,~Laoshan District,~Qingdao
City,~266100,~People's Republic of China}

\email{gaoxiangshuli@126.com}

\author{Yu Zheng}


\address{Department of Mathematics,~East China Normal University,
~Lane 500, DongChuan Road,~Minhang District,~Shanghai
City,~200241,~People's Republic of China}

\email{zhyu@math.ecnu.edu.cn}

\date{May 16, 2011}

\begin{abstract}
In this paper,~we firstly establish an Interpolating curvature
invariance between the well known nonnegative and 2-non-negative
curvature invariant along the Ricci flow.~Then a related strong
maximum principle for the~$(\lambda_1, \lambda_2)$-nonnegativity is
also derived along Ricci flow.~Based on these,~finally we obtain a
rigidity property of manifolds with~$(\lambda_1,
\lambda_2)$-nonnegative curvature operators.
\end{abstract}

\keywords{Ricci flow,~$(\lambda_1, \lambda_2)$-nonnegative curvature
operator,~Maximum principle}
\subjclass[2000]{58G25,~35P05}
\maketitle

\section{Introduction And The Main Results}

One of the basic problems in Riemannian geometry is to relate
curvature and topology~(see [1], [2], [3] and [4]).~In [5] B\"ohm
and Wilking have proved that \emph{n}-dimensional closed Riemannian
manifolds with 2-positive curvature operators are diffeomorphic to
spherical space forms,~i.e.,~they admit metrics with constant
positive sectional curvature.~One of the key points of their theorem
is that the 2-positive or 2-nonnegative curvature condition is
preserved by the Ricci flow.

Recall that the Riemannian curvature tensor is defined by
$$
 Rm\left(
{X,Y} \right)Z = \nabla _X \nabla _Y Z - \nabla _Y \nabla _X Z -
\nabla _{\left[ {X,Y} \right]} Z.
$$
The Riemannian curvature operator,~denoted by~$ \mathcal {R} $,~is
the symmetric bilinear form on~$ \Lambda ^2 TM$~(or self-adjoint
transformation of~$ \Lambda ^2 TM$)~defined by
$$
\mathcal {R}\left( {X\Lambda Y,Z\Lambda W} \right) = \left\langle
{\mathcal {R}\left( {X\Lambda Y} \right),Z\Lambda W} \right\rangle =
2\left\langle {Rm\left( {X,Y} \right)W,Z} \right\rangle
$$
for tangent vectors~$X, Y, Z, W$.~Let~$ \left\{ {\mu _\alpha \left(
\mathcal {R} \right)\left| {\mu _1  \le  \cdots  \le \mu _N }
\right.} \right\}_{\alpha  = 1}^N$,~where $ N = {{n\left( {n - 1}
\right)} \mathord{\left/
 {\vphantom {{n\left( {n - 1} \right)} 2}} \right.
 \kern-\nulldelimiterspace} 2}$,~denote the eigenvalues of
Riemannian curvature operator~$ \mathcal {R} $.~We have the
following definition:
\begin{definition}[2-positive curvature operator]
A Riemannian manifold $ \left( {M^n ,g} \right)$~has 2-positive
curvature operator if
$$
 \mu _\alpha  \left(\mathcal {R} \right) + \mu
_\beta \left( \mathcal {R} \right) > 0 \eqno(1)
$$
for arbitrary~$ \alpha  \ne \beta$.
\end{definition}
{\it Remark 1.}~The 2-nonnegative curvature operator is defined in
the obvious way with~$ \ge $~replacing~$>$~in~(1).

In this paper,~we will consider a generalization of the
2-nonnegative curvature,~which is named as~$(\lambda_1,
\lambda_2)$-nonnegative curvature operator.~It relies on three
eigenvalues of Riemannian curvature operator~$ \mathcal {R} $. Let~$
\left\{ {\omega _\alpha } \right\}_{\alpha  = 1}^N$~be an
orthonormal basis of eigenvectors of~$ \mathcal {R} $~in~$so(n)$
with corresponding eigenvalues~$\mu _1 \le \mu _2  \le \cdots  \le
\mu _N$,~where~$ N = {{n\left( {n - 1} \right)} \mathord{\left/
 {\vphantom {{n\left( {n - 1} \right)} 2}} \right.
 \kern-\nulldelimiterspace} 2}$,~and let
$$
 \Lambda  = \left\{ {\left(
{x,y} \right) \in \left[ {0,1} \right] \times \left[ {0,1}
\right]\left| {0 \le y \le x \le 1,0 < 1 - \left( {x + y} \right)y
\le x \le 1} \right.} \right\},
$$
then as Definition 1.1,~the definition of $(\lambda_1,
\lambda_2)$-nonnegative curvature operator is given as follows:
\begin{definition}[$(\lambda_1, \lambda_2)$-nonnegative curvature operator]
For any 2 parameters~$(\lambda_1, \lambda_2) \in \Lambda $,~the
curvature operator~$\mathcal {R}$~is called as one~$(\lambda_1,
\lambda_2)$-nonnegative curvature operator on Riemannian
manifold~$(M^n, g)$~if~$ \mathcal {R} \in C_{\lambda_1,
\lambda_2}$,~where
$$
C_{\lambda _1 ,\lambda _2 }  = \left\{ {\mathcal {R}\left|
{\begin{array}{*{20}c}
   {C_{1,\lambda _1 ,\lambda _2 ,\alpha ,\beta ,\gamma }
   \left( \mathcal {R} \right) \ge 0}  \\
   {C_{2,\lambda _1 ,\lambda _2 ,\alpha ,\beta }
   \left( \mathcal {R} \right) \ge 0}  \\
\end{array},\begin{array}{*{20}c}
   {\left( {\lambda _1 ,\lambda _2 } \right) \in \Lambda }  \\
   {\forall 1 \le \alpha  < \beta  < \gamma  \le N}  \\
\end{array}} \right.} \right\}, \eqno(2)
$$
where
$$
 C_{1,\lambda _1 ,\lambda _2 ,\alpha ,\beta ,\gamma } \left( \mathcal {R} \right) =
\mathcal {R}_{\alpha \alpha }  + \lambda _1 \mathcal {R}_{\beta
\beta } + \lambda _2 \mathcal {R}_{\gamma \gamma },
$$
$$
 C_{2,\lambda
_1 ,\lambda _2 ,\alpha ,\beta } \left( \mathcal {R} \right) =
\lambda _1 \mathcal {R}_{\alpha \alpha } + \left( {1 - \left(
{\lambda _1  + \lambda _2 } \right)\lambda _2 } \right)\mathcal
{R}_{\beta \beta }
$$
and
$$
 \mathcal {R}_{\alpha \alpha } = \mathcal {R}\left( {\omega
_\alpha ,\omega _\alpha  } \right) = \mu _{\alpha }.
$$
\end{definition}

{\it Remark 2.}~The~$(\lambda_1, \lambda_2)$-positive curvature
operator~$ C_{\lambda _1 ,\lambda _2 }^ +$~is defined in the obvious
way with~$>$~replacing~$ \ge $~in~(2).

{\it Remark 3.}~It is easy to see that for the fixed~$ \lambda _1
$,~let
$$
\lambda _2 \to \frac{{\sqrt {\lambda _1^2  + 4}  - \lambda _1 }}{2}
$$
such that~$ \left( \lambda _1 ,\lambda _2 \right) \in \Lambda
$,~then we obtain that the~$ \left( \lambda _1 ,\lambda _2 \right)
$-nonnegative curvature operator turns into nonnegative curvature
operator~(see Theorem 2.2 for details).

Moreover it turns into 2-nonnegative curvature operator in [5] if~$
\lambda _1 = 1, \lambda _2 = 0 $.~In fact this curvature is an
Interpolating curvature condition between nonnegative and
2-nonnegative curvature.~Actually it can be seen in section 2 that
$$
\bigcap\limits_{\left( {\lambda _1 ,\lambda _2 } \right) \in \Lambda
} {C_{\lambda _1 ,\lambda _2 } }  = \left\{ {\mathcal {R}\left|
{\mathcal {R}_{\alpha \alpha }  \ge 0,\forall 1 \le \alpha  \le N}
\right.} \right\}
$$
and
$$
\bigcup\limits_{\left( {\lambda _1 ,\lambda _2 } \right) \in \Lambda
} {C_{\lambda _1 ,\lambda _2 } }  = \left\{ {\mathcal {R}\left|
{\mathcal {R}_{\alpha \alpha }  + \mathcal {R}_{\beta \beta }  \ge
0,\forall 1 \le \alpha < \beta \le N} \right.} \right\}.
$$

{\it Remark 4.}~But the~$ \left( {\lambda _1 ,\lambda _2 } \right)
$-nonnegative curvature operator is not always equal to
2-nonnegative curvature.~For example,
$$
 \left( {\lambda _1 ,\lambda
_2 } \right) = \left( {\frac{3}{4},\frac{1}{2}} \right),
$$
or more generally when~$ \lambda _1  > 1 - \left( {\lambda _1  +
\lambda _2 } \right)\lambda _2 $,~the curvature operator~$ \mathcal
{R}_{11}  = - 1,\mathcal {R}_{22} = 1, \cdots$~which satisfies
2-nonnegativity is not $ \left( {\lambda _1 ,\lambda _2 } \right)
$-nonnegativity.

Now we formulate one of the main results of this paper as follows:
\begin{theorem}[Weak maximum principle for~$ \left( {\lambda _1 ,\lambda _2 } \right)
$-nonnegativity] Let $ \left( {M^n ,g\left( t \right)} \right)$,~$ t
\in \left[ {0,T} \right)$,~be a solution to the Ricci flow on a
closed manifold.~If the curvature operator~$ \mathcal {R}(g(0))
$~is~$ \left( {\lambda _1 ,\lambda _2 } \right) $-nonnegative,~then
for any~$ 0 \le t < T $~the curvature operator~$ \mathcal {R}(g(t))
$~is also~$ \left( {\lambda _1 ,\lambda _2 } \right) $-nonnegative.
\end{theorem}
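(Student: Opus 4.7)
The plan is to apply Hamilton's maximum principle for systems to the evolution equation
\[\partial_t \mathcal{R} = \Delta \mathcal{R} + \mathcal{R}^2 + \mathcal{R}^\#\]
satisfied by the curvature operator along Ricci flow in an Uhlenbeck-type moving frame. This reduces the PDE statement to three fibrewise verifications for the set $C_{\lambda_1,\lambda_2}$: invariance under the action of $O(n)$ (so that the set descends to a subbundle independently of frame), convexity, and invariance under the ODE $\tfrac{d}{dt}\mathcal{R} = \mathcal{R}^2+\mathcal{R}^\#$. The $O(n)$-invariance is immediate, since the defining inequalities depend only on the sorted eigenvalues of~$\mathcal{R}$.

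For convexity, both defining functions are weighted sums of sorted eigenvalues whose coefficients are monotonically non-increasing: for $C_1$ the weights are $(1,\lambda_1,\lambda_2)$, and for $C_2$ they are $(\lambda_1,\,1-(\lambda_1+\lambda_2)\lambda_2)$, where the inequality $1-(\lambda_1+\lambda_2)\lambda_2\leq\lambda_1$ built into $\Lambda$ is exactly what is required. By Abel summation each such combination expresses as a non-negative sum of partial eigenvalue sums $\mu_1+\cdots+\mu_k$, which are concave functions of $\mathcal{R}$ by Ky Fan's minimum principle. Hence $C_{\lambda_1,\lambda_2}$ is an intersection of half-spaces, so convex.

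The heart of the argument, and the expected main obstacle, is the ODE step. In a frame that diagonalises $\mathcal{R}$ at a given instant, Hamilton's calculation gives
\[\frac{d\mu_\alpha}{dt}=\mu_\alpha^2+\sum_{\beta,\gamma}\bigl(c^\alpha_{\beta\gamma}\bigr)^2\mu_\beta\mu_\gamma,\]
where $c^\alpha_{\beta\gamma}$ are the structure constants of $so(n)$ in the eigenbasis. At a boundary point where $\mu_\alpha+\lambda_1\mu_\beta+\lambda_2\mu_\gamma=0$ while the remaining cone inequalities still hold, one must show that the same linear combination of right-hand sides is non-negative. The $\mathcal{R}^2$ contribution $\mu_\alpha^2+\lambda_1\mu_\beta^2+\lambda_2\mu_\gamma^2$ is manifestly non-negative, so the core task is to estimate the signed $\mathcal{R}^\#$ cross terms using the remaining $C_1$ and $C_2$ inequalities. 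Here the geometry of $\Lambda$ is crucial: the bound $\lambda_2\leq\lambda_1$ controls cross terms of the type $\mu_\beta\mu_\gamma$, while $1-(\lambda_1+\lambda_2)\lambda_2\leq\lambda_1$ lets one absorb negative cross terms of the type $\mu_1\mu_j$ through the $C_2$ inequalities. An analogous case analysis handles the $C_2$ boundary.

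As a sanity check, the two degenerate specialisations must reproduce known results: in the limit $\lambda_2\to(\sqrt{\lambda_1^2+4}-\lambda_1)/2$ the $C_2$-inequalities collapse to $\mu_\alpha\geq 0$ for all $\alpha$, so the argument must specialise to Hamilton's proof that nonnegativity is preserved, while at $(\lambda_1,\lambda_2)=(1,0)$ it must specialise to B\"ohm--Wilking's proof that 2-nonnegativity is preserved. These two endpoints of the interpolation organise and constrain the required cross-term estimates, and an argument that works uniformly for all $(\lambda_1,\lambda_2)\in\Lambda$ is precisely what the proof must supply.
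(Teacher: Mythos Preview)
Your plan is correct and follows the paper's route: reduce via Hamilton's maximum principle for convex sets (the paper's Lemma~3.1) to verifying that $C_{\lambda_1,\lambda_2}$ is convex, parallel-invariant, and preserved by the ODE $d\mathcal{R}/dt=\mathcal{R}^2+\mathcal{R}^\#$ (the paper's Lemma~3.2 and Claim~3.3). Your Ky~Fan/Abel-summation argument for convexity is more explicit than the paper's, which simply declares convexity ``obvious'' with a citation.

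One ingredient you should make explicit when you execute the $\mathcal{R}^\#$ estimate: the paper first observes that on the boundary of $C_{\lambda_1,\lambda_2}$ one necessarily has $\mu_1\le 0\le\mu_2\le\cdots\le\mu_N$ (if $\mu_1>0$ the operator is nonnegative and hence strictly inside the cone; the paper handles this by a separate appeal to preservation of nonnegativity). The sign $\mu_1\le 0$ is then used directly in the cross-term bounds. For instance the paper's inequality~(13),
\[
\mu_3+\lambda_2\mu_1 \ \ge\ \mu_3+\lambda_1\mu_1 \ \ge\ \mu_2+\lambda_1\mu_1 \ \ge\ 0,
\]
needs both $\lambda_2\le\lambda_1$ \emph{and} $\mu_1\le 0$ for the first step. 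Your sketch (``the bound $\lambda_2\le\lambda_1$ controls cross terms of the type $\mu_\beta\mu_\gamma$'') invokes only the parameter inequality, so be sure to record and use this sign information when writing out the details; otherwise the estimate for the term $\sum_{\beta>3}(c_1^{3\beta})^2(\mu_3+\lambda_2\mu_1)\mu_\beta$ does not close.
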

{\it Remark 5.}~Since when~$ \lambda _1 = 1 , \lambda _2 = 0
$,~the~$ \left( 1 , 0 \right) $-nonnegative curvature operator turns
into the well known 2-nonnegative curvature operator defined by
B\"ohm and Wilking in [5],~as a corollary of Theorem 1.3 we obtain
the invariance of 2-nonnegative curvature along the Ricci flow
again:~if the curvature operator~$ \mathcal {R}(g(0)) $~is
2-nonnegative,~then for any~$ t \ge 0$~the curvature operator~$
\mathcal {R}(g(t)) $~is also 2-nonnegative.

Moreover by using weak maximum principle for~$ \left( {\lambda _1
,\lambda _2 } \right) $-nonnegativity, we also derive a useful
strong maximum principle for~$ \left( {\lambda _1 ,\lambda _2 }
\right) $-nonnegativity:
\begin{theorem}[Strong maximum principle for~$ \left( {\lambda _1 ,\lambda _2
} \right) $-nonnegativity] Let $ \left( {M^n ,g\left( t \right)}
\right)$,~$ t \in \left[ {0,T} \right)$,~be a solution to the Ricci
flow on a closed manifold.~If the curvature operator~$ \mathcal
{R}(g(0)) $~is~$ \left( {\lambda _1 ,\lambda _2 } \right)
$-nonnegative,~then

\noindent \emph{(i)}~For any~$ 0 < t < T$~the curvature operator~$
\mathcal {R}(g(t)) $~is either nonnegative or~$ \left( {\lambda _1
,\lambda _2 } \right) $-positive.

\noindent \emph{(ii)}~If~~$
 C_{1,\lambda _1 ,\lambda _2 ,\alpha ,\beta
,\gamma } \left( \mathcal {R} (g(0)) \right) $~is positive at a
point~$ x_0 $~in~$ M^n $,~then
$$
 C_{1,\lambda _1
,\lambda _2 ,\alpha ,\beta ,\gamma } \left( \mathcal {R} (g(t))
\right) > 0
$$
everywhere for any~$ 0 < t < T$.~Furthermore,~if~~$
 C_{2,\lambda _1
,\lambda _2 ,\alpha ,\beta } \left( \mathcal {R} (g(0)) \right) $~is
positive at a point~$ x_0 $~in~$ M^n $,~then
$$
 C_{2,\lambda _1
,\lambda _2 ,\alpha ,\beta } \left( \mathcal {R} (g(t)) \right)
> 0
$$
is also positive everywhere for any~$ 0 < t < T$.
\end{theorem}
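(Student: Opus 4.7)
The plan is to derive the strong maximum principle from two ingredients: the weak maximum principle (Theorem 1.3), together with the classical strong maximum principle for scalar parabolic inequalities applied to the quantities $C_{1,\lambda_1,\lambda_2,\alpha,\beta,\gamma}$ and $C_{2,\lambda_1,\lambda_2,\alpha,\beta}$. The starting point is the reaction-diffusion equation $\partial_t \mathcal{R} = \Delta \mathcal{R} + 2(\mathcal{R}^2 + \mathcal{R}^{\#})$ under Ricci flow, together with the fact (established in the proof of Theorem 1.3) that the nonlinear term $2(\mathcal{R}^2 + \mathcal{R}^{\#})$ is inward-pointing along the boundary of $C_{\lambda_1,\lambda_2}$; equivalently, each of the expressions above satisfies a parabolic differential inequality of the form $\partial_t \geq \Delta + K$, where $K$ is a zeroth-order term bounded in terms of $\sup|\mathcal{R}|$ on $M^n \times [0,t]$, whenever the quantity itself is nonnegative.

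For part (ii), fix a triple $\alpha < \beta < \gamma$ and set $f(x,t) = C_{1,\lambda_1,\lambda_2,\alpha,\beta,\gamma}(\mathcal{R}(g(t))(x))$. Since eigenvalues of $\mathcal{R}$ are only Lipschitz in general, I would follow Hamilton's standard device: at a point $(x_0,t_0)$ where $f$ nearly attains its spacetime minimum, choose an orthonormal eigenbasis $\{\omega_\alpha\}$ of $\mathcal{R}$ and extend it by parallel transport to a neighborhood of $x_0$, obtaining a smooth lower test function for $f$ that agrees with it at $(x_0,t_0)$. A direct computation yields the required parabolic inequality for this test function, and the scalar strong maximum principle then forces $f(x_0,0) > 0 \Rightarrow f(\cdot,t) > 0$ on all of $M^n$ for every $t \in (0,T)$. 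The argument for $C_{2,\lambda_1,\lambda_2,\alpha,\beta}$ is identical.

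For part (i), I would argue by contradiction. Suppose that for some $t_0 \in (0,T)$, $\mathcal{R}(g(t_0))$ is neither nonnegative nor $(\lambda_1,\lambda_2)$-positive. Since it fails to be $(\lambda_1,\lambda_2)$-positive, some $C_1$ or $C_2$ expression attains the value zero at some point $x_0$; the scalar strong maximum principle, run backward from $t_0$, then forces that quantity to vanish identically on $M^n \times [0,t_0]$. An algebraic analysis of the resulting linear relation among eigenvalues, combined with the remaining $(\lambda_1,\lambda_2)$-nonnegativity inequalities and the defining conditions $(\lambda_1,\lambda_2) \in \Lambda$ (in particular $\lambda_1 \geq 0$ and $1-(\lambda_1+\lambda_2)\lambda_2 > 0$), forces every eigenvalue of $\mathcal{R}(g(t_0))$ to be nonnegative, contradicting the assumption.

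The principal difficulty lies in two places. Analytically, one must make the scalar strong maximum principle rigorous for the eigenvalue-type functions $C_1$ and $C_2$, which requires the test-frame argument sketched above and is where the smoothness of parallel transport is essential. Algebraically, the deduction at the end of part (i)---converting the identical vanishing of a single linear combination of eigenvalues into pointwise nonnegativity of the entire spectrum of $\mathcal{R}$---is where the specific geometry of the region $\Lambda$ enters, and this step is the one I expect to require the most careful case analysis.
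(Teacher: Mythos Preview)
Your approach to part~(ii) differs from the paper's but is sound: you apply the scalar strong maximum principle directly to the eigenvalue quantity $C_1$ (or $C_2$), using parallel-transported eigenframes as smooth lower barriers. The paper instead introduces an auxiliary scalar solution $f$ of $\partial_t f=\Delta f-Af$ with $f(\cdot,0)=\varphi$, shifts the curvature operator to $\tilde{\mathcal R}=\mathcal R+(\varepsilon e^{At}-f)\,id_{\Lambda^2}$, applies the \emph{tensor} maximum principle to $\tilde{\mathcal R}$, and only then invokes the scalar strong maximum principle on $f$. Both routes reach the same conclusion; yours is arguably more direct, while the paper's avoids any viscosity-type reasoning by reducing everything to smooth objects.

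The genuine gap is in part~(i). Once you obtain, say, $C_{1,\lambda_1,\lambda_2,1,2,3}\equiv 0$ on $M^n\times[0,t_0]$, a purely algebraic argument cannot force $\mu_1\ge 0$. For instance, with $(\lambda_1,\lambda_2)=(1,0)\in\Lambda$ and pointwise spectrum $\mu_1=-1$, $\mu_2=\mu_3=\cdots=1$, one has $C_1=\mu_1+\mu_2=0$ and $C_2=\mu_1+\mu_2=0$; all $(\lambda_1,\lambda_2)$-nonnegativity inequalities hold, yet $\mu_1<0$. The paper closes this with a second PDE step: at any $(x,t)$ with $t\in(0,t_0]$, extend eigenvectors $\omega_1,\omega_2,\omega_3$ by parallel transport; the function $\mathcal R(\omega_1,\omega_1)+\lambda_1\mathcal R(\omega_2,\omega_2)+\lambda_2\mathcal R(\omega_3,\omega_3)$ has a minimum (equal to $0$) at $(x,t)$, so $\partial_t\le 0$ and $\Delta\ge 0$ there, while the reaction term is bounded below by $\mu_1^2+\lambda_1\mu_2^2+\lambda_2\mu_3^2$ by the estimate on $\mathcal R^{\#}$ from Claim~3.3. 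This forces $\mu_1=\mu_2=\mu_3=0$, hence $\mathcal R\ge 0$. So the ``careful case analysis'' you anticipate is not algebraic at all---it is a second use of the evolution equation, and in fact the parallel-transport device you proposed for part~(ii) is exactly what the paper deploys here.
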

Then as a corollary of Theorem 1.4,~we also have the following
strong maximum principle for~$ \left( {\lambda _1 ,\lambda _2 }
\right) $-nonnegativity:
\begin{corollary}
Let~$ \left( {M^n ,g\left( t \right)} \right)$,~$ t \in \left[ {0,T}
\right)$,~be a solution to the Ricci flow on a closed
manifold.~Suppose that the curvature operator~$ \mathcal {R}(g(0)) $
is~$ \left( {\lambda _1 ,\lambda _2 } \right) $-nonnegative,~if~$
\mathcal {R}(g(0)) $~is~$ \left( {\lambda _1 ,\lambda _2 } \right)
$-positive at a point~$ x_0 $~in~$ M^n $, then~$ \mathcal {R}(g(t))
$ is $ \left( {\lambda _1 ,\lambda _2 } \right) $-positive
everywhere for any~$ 0 < t < T$.
\end{corollary}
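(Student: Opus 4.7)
The plan is to derive Corollary 1.5 directly from part (ii) of Theorem 1.4 by unpacking what $(\lambda_1,\lambda_2)$-positivity at a point actually means. By Remark 2, saying that $\mathcal{R}(g(0))$ is $(\lambda_1,\lambda_2)$-positive at $x_0$ is the statement that for that point one has
$$
C_{1,\lambda_1,\lambda_2,\alpha,\beta,\gamma}\bigl(\mathcal{R}(g(0))(x_0)\bigr) > 0, \qquad C_{2,\lambda_1,\lambda_2,\alpha,\beta}\bigl(\mathcal{R}(g(0))(x_0)\bigr) > 0
$$
for every admissible triple $1 \le \alpha < \beta < \gamma \le N$ and every admissible pair $1 \le \alpha < \beta \le N$. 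So the hypothesis decomposes into a finite family of strict scalar inequalities at the single point $x_0$.

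Next I would apply Theorem 1.4(ii) once for each of those scalar inequalities. Since $\mathcal{R}(g(0))$ is globally $(\lambda_1,\lambda_2)$-nonnegative, the hypotheses of Theorem 1.4 are met, so the first half of (ii) gives $C_{1,\lambda_1,\lambda_2,\alpha,\beta,\gamma}(\mathcal{R}(g(t))) > 0$ everywhere on $M^n$ for every $0 < t < T$, for each fixed $(\alpha,\beta,\gamma)$. The second half of (ii) gives the analogous propagation for every $C_{2,\lambda_1,\lambda_2,\alpha,\beta}$. Because the number of triples and pairs is finite and the conclusion is pointwise strict positivity, intersecting over them preserves strictness.

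Combining these conclusions, both families of defining quantities for the cone $C_{\lambda_1,\lambda_2}^+$ are strictly positive everywhere on $M^n$ for any $0 < t < T$. By Remark 2 this is exactly the statement that $\mathcal{R}(g(t))$ lies in $C_{\lambda_1,\lambda_2}^+$ pointwise, i.e., is $(\lambda_1,\lambda_2)$-positive everywhere. This completes the argument; no new analytic input beyond Theorem 1.4 is required.

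The only mild subtlety, and the one point where I would be careful, is verifying that the pointwise hypothesis at $x_0$ really supplies the input that Theorem 1.4(ii) demands \emph{for every index configuration simultaneously}: the theorem is phrased for a single fixed $(\alpha,\beta,\gamma)$ or $(\alpha,\beta)$, and one must observe that applying it separately to each of the finitely many configurations and then taking the intersection of the resulting open conditions yields the required global $(\lambda_1,\lambda_2)$-positivity. This is a bookkeeping step rather than a genuine obstacle.
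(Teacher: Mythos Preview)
Your proposal is correct and matches the paper's approach: the paper does not give a standalone proof of Corollary 1.5 but simply states it as an immediate consequence of Theorem 1.4(ii), and in the proof of Theorem 1.4(i) writes the one-line justification ``by (ii), $g(t)$ is $(\lambda_1,\lambda_2)$-positive everywhere for $0<t<T$.'' Your write-up is just a careful unpacking of that sentence, and the bookkeeping point you flag (finitely many index configurations, intersecting open conditions) is handled in the paper implicitly by the observation that the minimum of $C_{1,\lambda_1,\lambda_2,\alpha,\beta,\gamma}$ over all triples is attained at $(\alpha,\beta,\gamma)=(1,2,3)$, and similarly for $C_2$ at $(1,2)$, so one only needs to invoke (ii) once for each of the two families.
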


On the other hand,~in [5] B\"ohm and Wilking also derive a
convergence result of 2-positive curvature along the Ricci flow:~on
a compact manifold the normalized Ricci flow evolves a Riemannian
metric with 2-positive curvature operator to a limit metric with
constant sectional curvature.

Then by using
$$
\bigcup\limits_{\left( {\lambda _1 ,\lambda _2 } \right) \in \Lambda
} {C_{\lambda _1 ,\lambda _2 }^ +  }  = \left\{ {\mathcal {R}\left|
{\mathcal {R}_{\alpha \alpha }  + \mathcal {R}_{\beta \beta }  >
0,\forall 1 \le \alpha  < \beta \le N} \right.} \right\}
$$
which is proved in section 2 we have
$$
C_{\lambda _1 ,\lambda _2 }^ + \subset \left\{ {\mathcal {R}\left|
{\mathcal {R}_{\alpha \alpha }  + \mathcal {R}_{\beta \beta }  >
0,\forall 1 \le \alpha < \beta  \le N} \right.} \right\}
$$
for arbitrary~$ \left( {\lambda _1 ,\lambda _2 } \right) \in \Lambda
$.~Thus by using Theorem 1.4 we derive a convergence result of~$
\left( {\lambda _1 ,\lambda _2 } \right) $-positive curvature along
the Ricci flow:
\begin{corollary}
Let~$ \left( {M^n ,g\left( t \right)} \right)$,~$ t \in \left[
{0,\infty } \right)$,~be a solution to the normalized Ricci flow:
$$
\frac{{\partial g}}{{\partial t}} =  - 2Rc\left( g \right) +
\frac{2}{n}rg
$$
on the closed manifold~$ M^n $~and~$ \left( {\lambda _1 ,\lambda _2
} \right) \in \Lambda $.~If the curvature operator~$ \mathcal
{R}(g(0)) $ is~$ \left( {\lambda _1 ,\lambda _2 } \right)
$-nonnegative and~$ \left( {\lambda _1 ,\lambda _2 } \right)
$-positive at a point in~$ M^n $,~then the normalized Ricci flow
evolves the Riemannian metric to a limit metric with constant
sectional curvature.
\end{corollary}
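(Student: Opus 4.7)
The plan is to bootstrap: use the strong maximum principle to upgrade pointwise $(\lambda_1,\lambda_2)$-positivity at $t=0$ to $(\lambda_1,\lambda_2)$-positivity everywhere for any $t>0$, then use the inclusion into the 2-positive cone, and finally invoke B\"ohm--Wilking's convergence theorem from [5].

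First, I would observe that the $(\lambda_1,\lambda_2)$-nonnegativity and $(\lambda_1,\lambda_2)$-positivity conditions are scale-invariant, since they are homogeneous linear inequalities on the eigenvalues of $\mathcal{R}$. Hence the invariance statements in Theorem 1.3 and Corollary 1.5, although formulated for unnormalized Ricci flow, translate verbatim to the normalized Ricci flow, because the normalized flow differs from the unnormalized one only by a time-dependent rescaling of the metric (and a reparametrization of time). Applying Corollary 1.5 to the normalized solution $g(t)$, I conclude from the hypothesis that $\mathcal{R}(g(t))$ is $(\lambda_1,\lambda_2)$-positive everywhere on $M^n$ for every $0<t<\infty$.

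Second, I would fix any $t_0>0$ and use the inclusion
$$
C_{\lambda_1,\lambda_2}^{+}\subset\left\{\mathcal{R}\bigm| \mathcal{R}_{\alpha\alpha}+\mathcal{R}_{\beta\beta}>0,\ \forall\, 1\le\alpha<\beta\le N\right\}
$$
recorded in the discussion preceding the statement (and proved in Section~2 via the decomposition of $\Lambda$). Thus $\mathcal{R}(g(t_0))$ is in fact 2-positive at every point of $M^n$. Starting the normalized Ricci flow afresh from the metric $g(t_0)$, I am in the exact setting of B\"ohm and Wilking's convergence theorem in [5]: a closed Riemannian manifold whose initial metric has 2-positive curvature operator. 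Their theorem then yields that the normalized Ricci flow issued from $g(t_0)$ converges to a limit metric of constant positive sectional curvature. Since this flow agrees with our original $g(t)$ for $t\ge t_0$ (up to uniqueness of solutions), the corollary follows.

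The main obstacle, such as it is, is not in the argument itself but in verifying that the invariance/propagation results (Theorem 1.3 and Corollary 1.5) are applicable to the normalized flow. This is where one must check that the cones $C_{\lambda_1,\lambda_2}$ and $C_{\lambda_1,\lambda_2}^{+}$ are invariant under homothety, so that the rescaling and time-reparametrization relating the normalized and unnormalized flows do not alter membership in these cones. After that, the proof is essentially an assembly: strong maximum principle $\Rightarrow$ pointwise positivity propagates everywhere $\Rightarrow$ 2-positivity everywhere $\Rightarrow$ B\"ohm--Wilking convergence. No fresh analytic estimates or PDE arguments are needed beyond those already encapsulated in the cited Theorem 1.4/Corollary 1.5 and in [5].
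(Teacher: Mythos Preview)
Your proposal is correct and follows essentially the same route as the paper: invoke the strong maximum principle (Corollary~1.5, which the paper states as a consequence of Theorem~1.4) to upgrade pointwise $(\lambda_1,\lambda_2)$-positivity to global $(\lambda_1,\lambda_2)$-positivity for $t>0$, then use the inclusion $C_{\lambda_1,\lambda_2}^{+}\subset\{\text{2-positive}\}$ from Section~2, and finally apply B\"ohm--Wilking's convergence theorem from~[5]. Your explicit remark on scale-invariance (to pass between normalized and unnormalized flow) is a detail the paper leaves implicit, but otherwise the arguments coincide.
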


The paper is organized as follows.~In section 2,~we present some
preliminaries.~In section 3,~we prove Theorem 1.3 by directly
calculating.~In section 4,~we prove the strong maximum principle for
the~$ \left( {\lambda _1 ,\lambda _2 } \right) $-nonnegativity along
Ricci flow.~Based on this,~we also obtain a rigidity property of
manifolds with~$ \left( {\lambda _1 ,\lambda _2 } \right)
$-nonnegative curvature operators.

\section{Preliminaries}

Recall that for an orthonormal basis~$ \left\{ {\varphi ^\alpha  }
\right\}_{\alpha = 1}^N$~of~$ \Lambda ^2 T^* M^n \cong so\left( n
\right)$, where $so\left( n \right)$~denotes the anti-symmetric
matrix algebra and the structure constants for the Lie bracket are
given by
$$
 \left[ {\varphi ^\alpha ,\varphi ^\beta  } \right] =
\sum\limits_\gamma {c_\gamma ^{\alpha \beta } \varphi ^\gamma }.
$$
So that
$$
 c_\gamma ^{\alpha \beta }  = \left\langle {\left[
{\varphi ^\alpha  ,\varphi ^\beta } \right],\varphi ^\gamma },
\right\rangle
$$
which is anti-symmetric in all 3 components.

For the Lie algebra~$ \Lambda ^2 T^{*} M^n$,~let~$ \mathcal
{F}_{\Lambda ^2 TM^n } $~denote the symmetric bilinear functional
on~$ \Lambda ^2 TM^n$,~then we have the following sharp product
operator~$ \# :\mathcal {F}_{\Lambda ^2 TM^n }  \times \mathcal
{F}_{\Lambda ^2 TM^n } \to \mathcal {F}_{\Lambda ^2 TM^n } $:
$$
\left( {A\# B} \right)_{\alpha \beta }  = \left( {A\# B}
\right)\left( {\varphi _\alpha  ,\varphi _\beta  } \right) =
\frac{1}{2}c_\alpha ^{\gamma \eta } c_\beta ^{\delta \theta }
A_{\gamma \delta } B_{\eta \theta },
$$
where~$ \left\{ {\varphi _\alpha } \right\}_{\alpha  = 1}^N$~is the
the dual orthonormal basis~of~$ \Lambda ^2 TM^n$,~ \emph{A} and
\emph{B} are the symmetric bilinear functionals on~$ \Lambda ^2
TM^n$~such that~$ A_{\gamma \delta } = A\left( {\varphi _\gamma
,\varphi _\delta  } \right)$~and $ B_{\eta \theta } = B\left(
{\varphi _\eta ,\varphi _\theta  } \right)$,~and also let~$ A^\#
$~denote~$ A\# A $.

In this paper,~we need to use the following famous maximum principle
for symmetric 2-tensors proved by Hamilton~[7]:
\begin{lemma}[Maximum principle for symmetric 2-tensors]
Let~$ g\left( t \right) $~be a smooth 1-parameter family of
Riemannian metrics on a closed manifold~$ {M^n} $. Let~$ \alpha
\left( t \right)$~be a symmetric 2-tensor satisfying
$$
\frac{\partial }{{\partial t}}\alpha  \ge \Delta _{g\left( t
\right)} \alpha  + \nabla _{X\left( t \right)} \alpha  + \beta,
$$
where~$ X\left( t \right) $~is a time-dependent vector field and
$$
\beta \left( {x,t} \right) = \beta \left( {\alpha \left( {x,t}
\right),g\left( {x,t} \right)} \right)
$$
is a symmetric 2-tensor which is locally Lipschitz in all its
arguments.~Suppose~$ \beta $~satisfies the null-eigenvector
assumption and that if~$ A_{ij} $~is a nonnegative symmetric
2-tensor at a point~$ \left( {x,t} \right) $~and if V is a vector
such that~$ A_{ij} V^j  = 0$,~then
$$
\beta _{ij} \left( {A,g} \right)V^i V^j  \ge 0.
$$
If~$ \alpha \left( 0 \right) \ge 0$,~then~$ \alpha \left( t \right)
\ge 0$~for all~$ t \ge 0$~as long as the solution exists.
\end{lemma}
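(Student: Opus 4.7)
The plan is to run Hamilton's first-minimum-time argument on a strictly positive perturbation of $\alpha$ and then let the perturbation vanish. Concretely, for $\epsilon > 0$ and a constant $K > 0$ to be chosen later, I would consider the perturbed tensor
$$\alpha_\epsilon(t) = \alpha(t) + \epsilon\, \E^{Kt}\, g(t),$$
which is strictly positive definite at $t=0$. If one can show $\alpha_\epsilon(t) > 0$ for every $t \in [0,T)$ and every $\epsilon > 0$, sending $\epsilon \to 0$ yields $\alpha(t) \ge 0$, as desired. So the whole proof reduces to establishing the strict version for the perturbed tensor.

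Assume by contradiction that strict positivity of $\alpha_\epsilon$ first fails at some $t_0 \in (0,T)$. By compactness of $M^n$ and continuity in $t$, there exist a point $x_0 \in M^n$ and a unit vector $V_0 \in T_{x_0} M^n$ with $\alpha_\epsilon(x_0,t_0)(V_0,V_0) = 0$ while $\alpha_\epsilon(x,t) \ge 0$ for all $t < t_0$. In particular $V_0$ lies in the null space of $\alpha_\epsilon(x_0,t_0)$. I would extend $V_0$ to a local vector field $V$ near $(x_0, t_0)$ by parallel transport along $g(t_0)$-geodesics emanating from $x_0$ and then freezing in time, so that at the critical point
$$\nabla V = 0, \qquad \Delta V = 0, \qquad \pd{t} V = 0.$$
Define $f(x,t) = \alpha_\epsilon(x,t)(V(x,t),V(x,t))$. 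Then $f$ attains its minimum value $0$ over the parabolic slab $M^n \times [0,t_0]$ at $(x_0,t_0)$, so at that point $\nabla f = 0$, $\Delta f \ge 0$, and $\partial_t f \le 0$.

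Next I would compute $\partial_t f - \Delta f$ using the evolution inequality satisfied by $\alpha_\epsilon$. Since all spatial and time derivatives of $V$ vanish at $(x_0,t_0)$, the calculation collapses to
$$\pd{t} f - \Delta f - \nabla_{X} f \;\ge\; \beta\bigl(\alpha_\epsilon - \epsilon \E^{Kt_0} g,\,g\bigr)(V_0,V_0) + \epsilon K \E^{Kt_0} \abs{V_0}^2 - C\epsilon,$$
where $C$ depends on the local Lipschitz constant of $\beta$ and on $\pd{t} g$. Because $\alpha = \alpha_\epsilon - \epsilon \E^{Kt_0} g$ is nonnegative at $(x_0,t_0)$ and $V_0$ is a null eigenvector of that tensor there, the null-eigenvector hypothesis forces the first term to be nonnegative. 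Choosing $K$ larger than $C$ makes the right-hand side strictly positive, which, together with $\nabla f = 0$, contradicts $\pd{t} f \le 0$ and $\Delta f \ge 0$.

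The main obstacle is calibrating the perturbation correctly: the null-eigenvector hypothesis is a statement about the \emph{unperturbed} tensor $\alpha$, so the Lipschitz assumption on $\beta$ must be used to control the difference $\beta(\alpha_\epsilon,g) - \beta(\alpha,g)$; this is what forces the exponential growth rate $K$ to be chosen in terms of the Lipschitz constant of $\beta$ and the supremum norm of $\pd{t} g$ on compact time intervals, which in turn relies on the closedness of $M^n$. Once this calibration is done, the contradiction is immediate and Lemma~2.1 follows.
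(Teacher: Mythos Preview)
The paper does not prove this lemma; it quotes it as a known result of Hamilton (reference~[7]). So there is no proof in the paper to compare against. Your overall strategy is the standard Hamilton perturbation argument and is sound, but there is a genuine slip at the key step.

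You assert that at $(x_0,t_0)$ the tensor $\alpha = \alpha_\epsilon - \epsilon\,\E^{Kt_0} g$ is nonnegative with $V_0$ as a null eigenvector, and you invoke the null-eigenvector hypothesis on this $\alpha$. That is false: since $\alpha_\epsilon(V_0,V_0)=0$ you have $\alpha(V_0,V_0) = -\epsilon\,\E^{Kt_0}\abs{V_0}^2 < 0$, so $\alpha$ is \emph{not} nonnegative at $(x_0,t_0)$ and $V_0$ is not a null eigenvector of it. The null-eigenvector hypothesis simply does not apply to $\alpha$ here.

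The tensor to which the hypothesis does apply is $\alpha_\epsilon$ itself: at $(x_0,t_0)$ it is nonnegative and $V_0$ lies in its kernel, hence $\beta(\alpha_\epsilon,g)(V_0,V_0)\ge 0$. The Lipschitz assumption then gives
\[
\beta(\alpha,g)(V_0,V_0)\;\ge\;\beta(\alpha_\epsilon,g)(V_0,V_0)-L\,\epsilon\,\E^{Kt_0}\;\ge\;-L\,\epsilon\,\E^{Kt_0},
\]
and this extra negative term, together with the contribution from $\epsilon\,\E^{Kt_0}\partial_t g$, is what must be absorbed by choosing $K$ large. Your displayed inequality already hints at this, since you say $C$ depends on the Lipschitz constant of $\beta$; but as written the first term on the right is $\beta(\alpha,g)$, for which no Lipschitz comparison has yet been made, and the subsequent sentence applies the null-eigenvector hypothesis to the wrong tensor. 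With this correction the argument goes through.
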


Moreover for the~$ \left( {\lambda _1 ,\lambda _2 } \right)
$-nonnegative curvature operator~$ C_{\lambda _1 ,\lambda _2 } $,~we
have the following interesting property:
\begin{theorem}
Let
$$
\Lambda  = \left\{ {\left( {x,y} \right) \in \left[ {0,1} \right]
\times \left[ {0,1} \right]\left| {0 \le y \le x \le 1,0 < 1 -
\left( {x + y} \right)y \le x \le 1} \right.} \right\},
$$
then we have
$$
\bigcap\limits_{\left( {\lambda _1 ,\lambda _2 } \right) \in \Lambda
} {C_{\lambda _1 ,\lambda _2 } }  = \left\{ {\mathcal {R}\left|
{\mathcal {R}_{\alpha \alpha }  \ge 0,\forall 1 \le \alpha  \le N}
\right.} \right\}, \eqno(3)
$$
$$
\bigcup\limits_{\left( {\lambda _1 ,\lambda _2 } \right) \in \Lambda
} {C_{\lambda _1 ,\lambda _2 } }  = \left\{ {\mathcal {R}\left|
{\mathcal {R}_{\alpha \alpha }  + \mathcal {R}_{\beta \beta }  \ge
0,\forall 1 \le \alpha < \beta \le N} \right.} \right\}. \eqno(4)
$$
and
$$
\bigcup\limits_{\left( {\lambda _1 ,\lambda _2 } \right) \in \Lambda
} {C_{\lambda _1 ,\lambda _2 }^ +  }  = \left\{ {\mathcal {R}\left|
{\mathcal {R}_{\alpha \alpha }  + \mathcal {R}_{\beta \beta }  >
0,\forall 1 \le \alpha  < \beta \le N} \right.} \right\}\eqno(5)
$$
\end{theorem}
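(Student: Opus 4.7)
The plan is to verify each of the three identities (3), (4), (5) by establishing both inclusions separately. For the easy containments I would first note that $(1,0) \in \Lambda$: the defining inequalities $0 \le y \le x \le 1$ and $0 < 1-(x+y)y \le x$ are all satisfied at this point. At $(1,0)$ both $C_1$ and $C_2$ reduce to $\mathcal{R}_{\alpha\alpha} + \mathcal{R}_{\beta\beta}$, so $2$-nonnegativity forces $\mathcal{R} \in C_{1,0}$ and $2$-positivity forces $\mathcal{R} \in C_{1,0}^+$; this gives $\supseteq$ in both (4) and (5). For $\supseteq$ in (3), observe that $\lambda_1, \lambda_2 \ge 0$ and $s := 1-(\lambda_1+\lambda_2)\lambda_2 > 0$ throughout $\Lambda$, so if every eigenvalue $\mu_\alpha \ge 0$, then both $C_1$ and $C_2$ are manifestly nonnegative linear combinations of nonnegative numbers.

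The key algebraic observation powering the non-trivial directions of (4) and (5) is that the constraint $1-(\lambda_1+\lambda_2)\lambda_2 \le \lambda_1$ in the definition of $\Lambda$ forces $0 < s \le \lambda_1$, and in particular $\lambda_1/s \ge 1$. Given the inequality $C_2 = \lambda_1 \mu_\alpha + s \mu_\beta \ge 0$, I would split into cases on the sign of $\mu_\alpha$. If $\mu_\alpha \ge 0$, the eigenvalue ordering $\mu_\alpha \le \mu_\beta$ yields $\mu_\alpha + \mu_\beta \ge 0$ immediately. If $\mu_\alpha < 0$, rearrangement gives $\mu_\beta \ge (\lambda_1/s)(-\mu_\alpha) \ge -\mu_\alpha$, so again $\mu_\alpha + \mu_\beta \ge 0$. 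This proves $\subseteq$ of (4); the same reasoning carried out with strict inequalities handles $\subseteq$ of (5). Notice that only the $C_2$-condition is actually needed here; $C_1$ plays no role in extracting $2$-nonnegativity.

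For the non-trivial inclusion $\subseteq$ of (3) I plan a limiting argument inside $\Lambda$. Fix $\lambda_1 = 1$ and let $\lambda_2 \uparrow \tfrac{\sqrt{5}-1}{2}$; one verifies directly that the one-parameter family $(1, \lambda_2)$ stays in $\Lambda$, because the coupled constraints $\lambda_2 \le 1$, $s > 0$, and $s \le 1$ are all preserved along the path. Along this family, $s(\lambda_2) = 1 - \lambda_2 - \lambda_2^2 \to 0^+$. Passing to the limit in the inequality $\mu_\alpha + s(\lambda_2)\mu_\beta \ge 0$ supplied by $C_2$ produces $\mu_\alpha \ge 0$ for every $\alpha$ admitting some $\beta > \alpha$, i.e.\ for $\alpha = 1, \ldots, N-1$. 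The ordering $\mu_N \ge \mu_{N-1} \ge 0$ then closes the argument.

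The main obstacle I anticipate is not a deep analytic step but a careful bookkeeping check that the sampled points $(1, \lambda_2)$ really do satisfy all the coupled inequalities defining $\Lambda$ for $\lambda_2$ arbitrarily close to $\tfrac{\sqrt{5}-1}{2}$, and that the strict positivity $s > 0$ on $\Lambda$ does not obstruct the passage to the limit in the $C_2$-inequality. Once this routine verification is in place, the three identities follow from the short arguments above, and no further machinery is required.
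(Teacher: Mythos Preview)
Your proposal is correct and follows essentially the same route as the paper: the easy inclusions via $(1,0)\in\Lambda$ and nonnegativity of the coefficients, the inclusion $C_{\lambda_1,\lambda_2}\subseteq\{2\text{-nonnegative}\}$ via the constraint $0<1-(\lambda_1+\lambda_2)\lambda_2\le\lambda_1$ applied to $C_2$, and the intersection identity via a limit $\lambda_2\uparrow$ the root of $1-(\lambda_1+\lambda_2)\lambda_2=0$ so that the $C_2$-coefficient on $\mu_\beta$ vanishes. The only cosmetic differences are that the paper lets $\lambda_1$ be general rather than fixing $\lambda_1=1$, and writes the $\subseteq$ step of (4) as a single inequality chain rather than your sign-case split; neither changes the substance.
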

\begin{proof}
Firstly we prove~(3).~For any
$$
 \mathcal {R} \in \left\{ {\mathcal
{R}\left| {\mathcal {R}_{\alpha \alpha }  \ge 0,\forall 1 \le \alpha
\le N} \right.} \right\},
$$
we have
$$
{\mathcal {R}_{\alpha \alpha }  + \lambda _1 \mathcal {R}_{\beta
\beta }  + \lambda _2 \mathcal {R}_{\gamma \gamma }  \ge 0}
$$
and
$$
{\lambda _1 \mathcal {R}_{\alpha \alpha }  + \left( {1 - \left(
{\lambda _1  + \lambda _2 } \right)\lambda _2 } \right)\mathcal
{R}_{\beta \beta }  \ge 0}
$$
satisfy for arbitrary~$ {1 \le \alpha  < \beta  < \gamma  \le
N}$~and~$ \left( {\lambda _1 ,\lambda _2 } \right) \in \Lambda
$,~which implies~$ \mathcal {R} \in C_{\lambda _1 ,\lambda _2
}$.~Thus
$$
\left\{ {\mathcal {R}\left| {\mathcal {R}_{\alpha \alpha }  \ge
0,\forall 1 \le \alpha \le N} \right.} \right\} \subset C_{\lambda
_1 ,\lambda _2 }\eqno(6)
$$
for arbitrary~$ \left( {\lambda _1 ,\lambda _2 } \right) \in \Lambda
$.~Conversely,~we consider a fixed~$ {\overline {\lambda }_1 }
$,~since
$$
 0 \le \frac{{\sqrt {\overline \lambda  _1^2  + 4} -
\overline \lambda  _1 }}{2} = \frac{2}{{\sqrt {\overline \lambda
_1^2  + 4} + \overline \lambda  _1 }} \le 1
$$
and for sufficiently large~$ \overline \lambda  _1 $
$$
\frac{2}{{\sqrt {\overline \lambda  _1^2  + 4}  + \overline \lambda
_1 }} < \overline \lambda  _1,
$$
we have~$ \left( {\overline {\lambda _1 } ,\lambda _2 } \right) \in
\Lambda$~if
$$
\left| {\lambda _2  - \frac{{\sqrt {\overline \lambda  _1^2  + 4}  -
\overline \lambda  _1 }}{2}} \right|
$$
is sufficiently small.~Note that for any~$ \mathcal {R} \in
\bigcap\limits_{\left( {\lambda _1 ,\lambda _2 } \right) \in \Lambda
} {C_{\lambda _1 ,\lambda _2 } }$,~we have~$ \mathcal {R} \in
C_{\overline {\lambda }_1 ,\lambda _2 } $~if~$ \left( {\overline
{\lambda }_1 ,\lambda _2 } \right) \in \Lambda$,~it follows that
$$
\overline \lambda  _1 \mathcal {R}_{\alpha \alpha }  + \left( {1 -
\left( {\overline \lambda  _1  + \lambda _2 } \right)\lambda _2 }
\right)\mathcal {R}_{\beta \beta }  \ge 0
$$
for arbitrary~$ {1 \le \alpha  < \beta  \le N}$,~which implies that
$$
\mathcal {R}_{11}  \ge  - \frac{{1 - \left( {\overline \lambda  _1 +
\lambda _2 } \right)\lambda _2 }}{{\overline \lambda  _1 }}\mathcal
{R}_{22}
$$
for the above arbitary~$ \left( {\overline {\lambda _1 } ,\lambda _2
} \right) \in \Lambda$.~Then let
$$
 \lambda _2  \to \frac{{\sqrt {\overline \lambda  _1^2  + 4}  - \overline \lambda  _1
}}{2}
$$
such that~$ \left( {\overline {\lambda }_1 ,\lambda _2 } \right) \in
\Lambda$,~we have
$$
1 - \left( {\overline \lambda  _1  + \lambda _2 } \right)\lambda _2
\to 0,
$$
which implies~$ \mathcal {R}_{11}  \ge 0$.~Since~$ \mathcal {R}_{11}
\le \mathcal {R}_{22} \le \cdots \le \mathcal {R}_{NN}$,~it follows
that
$$
\mathcal {R} \in \left\{ {\mathcal {R}\left| {\mathcal {R}_{\alpha
\alpha } \ge 0,\forall 1 \le \alpha  \le N} \right.} \right\}.
$$
Thus
$$
\bigcap\limits_{\left( {\lambda _1 ,\lambda _2 } \right) \in \Lambda
} {C_{\lambda _1 ,\lambda _2 } }  \subset  \left\{ {\mathcal
{R}\left| {\mathcal {R}_{\alpha \alpha }  \ge 0,\forall 1 \le \alpha
\le N} \right.} \right\}. \eqno(7)
$$
By using~(6)~and~(7)~we complete the proof of~(3).

Then we prove~(4).~For any~$ \left( {\lambda _1 ,\lambda _2 }
\right) \in \Lambda $,~if~$ \mathcal {R} \in C_{\lambda _1 ,\lambda
_2 }$~we have
$$
{\lambda _1 \mathcal {R}_{\alpha \alpha }  + \left( {1 - \left(
{\lambda _1  + \lambda _2 } \right)\lambda _2 } \right)\mathcal
{R}_{\beta \beta }  \ge 0}
$$
for arbitrary~$ {1 \le \alpha  < \beta  \le N}$.~By the definition
of~$\Lambda $~we have
\[
\begin{split}
 \mathcal {R}_{\alpha \alpha }  + \mathcal {R}_{\beta \beta } & = \frac{1}{{\lambda _1 }}\left( {\lambda _1 \mathcal {R}_{\alpha \alpha }  + \lambda _1 \mathcal {R}_{\beta \beta } } \right) \\
 & \ge \lambda _1 \mathcal {R}_{\alpha \alpha }  + \left( {1 - \left( {\lambda _1  + \lambda _2 } \right)\lambda _2 } \right)\mathcal {R}_{\beta \beta }  \\
 & \ge 0. \\
 \end{split}
\]
Hence
$$
C_{\lambda _1 ,\lambda _2 }  \subset \left\{ {\mathcal {R}\left|
{\mathcal {R}_{\alpha \alpha }  + \mathcal {R}_{\beta \beta }  \ge
0,\forall 1 \le \alpha < \beta \le N} \right.} \right\}
$$
for any~$ \left( {\lambda _1 ,\lambda _2 } \right) \in \Lambda
$,~which implies
$$
\bigcup\limits_{\left( {\lambda _1 ,\lambda _2 } \right) \in \Lambda
} {C_{\lambda _1 ,\lambda _2 } }  \subset \left\{ {\mathcal
{R}\left| {\mathcal {R}_{\alpha \alpha }  + \mathcal {R}_{\beta
\beta }  \ge 0,\forall 1 \le \alpha < \beta  \le N} \right.}
\right\}. \eqno(8)
$$
Conversely,~when~$ \lambda _1 = 1 , \lambda _2 = 0 $,~the~$ \left( 1
, 0 \right) $-nonnegative curvature operator
$$
C_{1,0}  = \left\{ {\mathcal {R}\left| {\mathcal {R}_{\alpha \alpha
}  + \mathcal {R}_{\beta \beta } \ge 0,\forall 1 \le \alpha  < \beta
\le N} \right.} \right\} ,
$$
which implies
$$
\bigcup\limits_{\left( {\lambda _1 ,\lambda _2 } \right) \in \Lambda
} {C_{\lambda _1 ,\lambda _2 } }  \supset C_{1,0}  = \left\{
{\mathcal {R}\left| {\mathcal {R}_{\alpha \alpha }  + \mathcal
{R}_{\beta \beta }  \ge 0,\forall 1 \le \alpha  < \beta  \le N}
\right.} \right\}. \eqno(9)
$$
By using~(8)~and~(9)~we complete the proof of~(4).

The proof of~(5)~is similar with the proof of~(4).~We only need to
note
\[
\begin{split}
 \mathcal {R}_{\alpha \alpha }  + \mathcal {R}_{\beta \beta } & = \frac{1}{{\lambda _1 }}\left( {\lambda _1 \mathcal {R}_{\alpha \alpha }  + \lambda _1 \mathcal {R}_{\beta \beta } } \right) \\
 & \ge \lambda _1 \mathcal {R}_{\alpha \alpha }  + \left( {1 - \left( {\lambda _1  + \lambda _2 } \right)\lambda _2 } \right)\mathcal {R}_{\beta \beta }  \\
 & > 0, \\
 \end{split}
\]
and when~$ \lambda _1 = 1 , \lambda _2 = 0 $,~the~$ \left( 1 , 0
\right) $-positive curvature operator
$$
C_{1,0}  = \left\{ {\mathcal {R}\left| {\mathcal {R}_{\alpha \alpha
}  + \mathcal {R}_{\beta \beta }
> 0,\forall 1 \le \alpha  < \beta  \le N} \right.} \right\} .
$$
\end{proof}

\section{Lemmas And Proof Of Theorem 1.3}

In this section,~we prove Theorem 1.3.~Firstly recall that the
following maximum principle established by Hamilton,~Chow and
Lu~(see [6] and [7])~is very useful in the Ricci flow:
\begin{lemma}[Maximum principle for convex sets]
Let~$ \left( {M^n ,g\left( t \right)} \right)$~be a solution to the
Ricci flow and let~$ K\left( t \right) \subset E = \Lambda ^2 M^n
\otimes _S \Lambda ^2 M^n$~be subsets which are invariant under
parallel translation and whose intersections~$ K\left( t \right)_x =
K\left( t \right) \cap E_x$~with each fiber are closed and
convex.~Suppose also that the set~$ \left\{ {\left( {v,t} \right)
\in E \times \left[ {0,T} \right)\left| {v \in K\left( t \right)}
\right.} \right\}$~is closed in~$ E \times \left[ {0,T} \right)$~and
suppose the \emph{ODE}~$ {{dM} \mathord{\left/
 {\vphantom {{dM} {dt}}} \right.
 \kern-\nulldelimiterspace} {dt}} = M^2  + M^\#$~has the property that
for any~$ M\left( {t_0 } \right) \in K\left( {t_0 } \right)$,~we
have~$ M\left( t \right) \in K\left( t \right)$~for arbitrary~$ t
\in \left[ {t_0 ,T} \right)$.~Then if~$ \mathcal {R}\left( 0 \right)
\in K\left( 0 \right)$,~we have~$ \mathcal {R}\left( t \right) \in
K\left( t \right)$~for arbitrary~$ t \in \left[ {0 ,T} \right)$.
\end{lemma}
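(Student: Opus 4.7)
The plan is to reduce the PDE-preservation statement for $\mathcal{R}(t)$ to the given ODE-preservation hypothesis by applying Hamilton's tensor maximum principle (Lemma 2.1) to a carefully chosen scalar quantity. The key input is the well-known evolution equation for the curvature operator under Ricci flow,
\begin{equation*}
\frac{\partial \mathcal{R}}{\partial t} = \Delta \mathcal{R} + \mathcal{R}^2 + \mathcal{R}^\#,
\end{equation*}
which is cleanest after Uhlenbeck's trick, so that $\mathcal{R}$ may be viewed as a time-dependent section of a fixed bundle $E$ with an evolving metric connection, and $\Delta$ is the associated rough Laplacian. In this formulation the fibers $E_x$ are independent of $t$, each $K(t)_x$ is a fixed closed convex subset of $E_x$, and the parallel-invariance hypothesis can be exploited through supporting hyperplanes.

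The heart of the argument is a first-touching contradiction. I would assume that $\mathcal{R}(t)\notin K(t)$ for some $t$, and use the closedness of $\{(v,t)\in E\times[0,T):v\in K(t)\}$ together with compactness of $M^n$ to extract a first time $t_0\in(0,T)$ and a point $x_0\in M^n$ at which $\mathcal{R}(x_0,t_0)\in\partial K(t_0)_{x_0}$ while $\mathcal{R}(\cdot,s)\in K(s)$ for all $s<t_0$. A supporting hyperplane to $K(t_0)_{x_0}$ at $\mathcal{R}(x_0,t_0)$ produces a linear functional $\ell$ on $E_{x_0}$ with $\ell\ge 0$ on $K(t_0)_{x_0}$ and $\ell(\mathcal{R}(x_0,t_0))=0$. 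Then I would extend $\ell$ by parallel translation (with respect to the connection on $E$ at time $t_0$) to a local section in a spatial neighborhood of $x_0$. By parallel-invariance of $K(t_0)$, $\ell(x)$ supports $K(t_0)_x$ at each nearby $x$, so the scalar $f(x):=\ell(x)\bigl(\mathcal{R}(x,t_0)\bigr)$ satisfies $f\ge 0$ near $x_0$ with $f(x_0)=0$.

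The scalar maximum principle is then applied to $\ell(\mathcal{R})$. Since $\ell$ is parallel at $x_0$, at the point $(x_0,t_0)$ we have
\begin{equation*}
\frac{\partial}{\partial t}\bigl(\ell(\mathcal{R})\bigr) = \ell(\Delta\mathcal{R}) + \ell(\mathcal{R}^2+\mathcal{R}^\#) = \Delta\bigl(\ell(\mathcal{R})\bigr) + \ell(\mathcal{R}^2+\mathcal{R}^\#).
\end{equation*}
At the first touching, $\Delta(\ell(\mathcal{R}))\ge 0$ because $\ell(\mathcal{R}(\cdot,t_0))$ attains a spatial minimum there, and $\partial_t(\ell(\mathcal{R}))\le 0$ because $\ell(\mathcal{R}(x_0,s))\ge 0$ for $s<t_0$. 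The ODE-preservation hypothesis provides the remaining ingredient: $\ell(\mathcal{R}^2+\mathcal{R}^\#)\ge 0$ at $(x_0,t_0)$, since otherwise the ODE solution starting from the boundary point $\mathcal{R}(x_0,t_0)\in\partial K(t_0)_{x_0}$ would immediately cross the supporting hyperplane and leave $K$, contradicting the hypothesis. The three inequalities together force equality throughout, and a routine $\varepsilon$-perturbation (replacing $K$ by an inwardly contracted family so that the reaction term strictly dominates) makes the argument strict and closes the contradiction.

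The main obstacle I anticipate is the time-dependence of $K(t)$: the support functional $\ell$ chosen at time $t_0$ need not support $K(s)_{x_0}$ for $s<t_0$, so the sign of $\partial_t(\ell(\mathcal{R}))(x_0,t_0)$ is not immediately controlled by $\mathcal{R}(x_0,s)\in K(s)$ for $s<t_0$. The standard remedy is to introduce a perturbed family $K_\varepsilon(t)\subset K(t)$ that retains the parallel-invariant closed convex structure but for which the ODE vector field $M^2+M^\#$ strictly points inward on $\partial K_\varepsilon(t)_{x}$; the strict version of the contradiction argument applies to $K_\varepsilon$, and the closedness of $\{(v,t):v\in K(t)\}$ allows us to pass to the limit $\varepsilon\to 0$. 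The only remaining technicalities, such as commuting $\ell$ past $\Delta$ via a geodesic normal frame at $x_0$ and verifying the local Lipschitz hypothesis needed to invoke Lemma 2.1, are routine.
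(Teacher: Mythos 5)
The paper does not actually prove this lemma: it is quoted as a known result of Hamilton, Chow and Lu (cited to [6] and [7]), so what you are reconstructing is the literature proof, and your reconstruction has a genuine gap at its core. At a first touching point the three inequalities you list give only $0\le\Delta(\ell(\mathcal{R}))$, $\partial_t(\ell(\mathcal{R}))\le 0$ and $\ell(\mathcal{R}^2+\mathcal{R}^\#)\ge 0$, i.e.\ the non-strict chain $0\ge 0$, which is no contradiction; everything therefore hinges on the step you dismiss as a ``routine $\varepsilon$-perturbation.'' For a general closed, convex, parallel-invariant family $K(t)$ there is no canonical inwardly contracted family $K_\varepsilon(t)$ that is again convex and parallel-invariant and on whose boundary $M^2+M^\#$ points strictly inward: the ODE hypothesis is itself only non-strict, the vector field can be tangent to $\partial K$ along faces, and it vanishes at fixed points lying on $\partial K$ (e.g.\ $\mathcal{R}=0$ when $K$ is a cone). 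The trick of adding $\varepsilon(1+t)\,g$ that works for Lemma 2.1 exploits special structure of the nonnegativity cone and does not generalize. The actual proofs (Hamilton's Theorem 4.3 of 1986, and Chow--Lu for time-dependent $K(t)$) do not argue by strict first touching at all: they estimate the fiberwise distance $\rho(x,t)=d\bigl(\mathcal{R}(x,t),K(t)_x\bigr)$, use parallel invariance and convexity to control the Laplacian term at a spatial maximum via a parallel-extended support functional, and use the \emph{local Lipschitz continuity} of $M\mapsto M^2+M^\#$ together with the ODE-invariance to get $\tfrac{d^+}{dt}\sup_x\rho\le C\sup_x\rho$, concluding by Gronwall. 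Your sketch never uses the Lipschitz property quantitatively, yet it is exactly this mechanism that replaces the strictness you are missing.

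The same omission leaves unresolved the time-dependence obstacle you correctly identify: since $K(t)$ need not be monotone in $t$, the functional $\ell$ supporting $K(t_0)_{x_0}$ need not be nonnegative on $K(s)_{x_0}$ for $s<t_0$, so the claimed sign of $\partial_t(\ell(\mathcal{R}))$ at the touching point is unjustified, and the proposed contracted family does nothing to repair it; the distance-function/Gronwall argument (together with the assumed closedness of the space-time track $\{(v,t):v\in K(t)\}$) is what handles this in the cited sources. A smaller point: invoking Lemma 2.1 is misplaced, since that lemma is the special symmetric-2-tensor case with the null-eigenvector condition, whereas the statement at hand is the strictly more general convex-set maximum principle, so it cannot be obtained by feeding a scalar quantity into Lemma 2.1. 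Your peripheral steps (Uhlenbeck's trick, parallel extension of $\ell$ so that $\Delta(\ell(\mathcal{R}))=\ell(\Delta\mathcal{R})$ at the point) are fine, but as written the proof's engine is missing.
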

Then we present the following lemma which is used in the proof of
Theorem 1.3:
\begin{lemma}
Suppose~$ \left( {M^n ,g} \right)$~is a Riemannian manifold,~and
let~$ C_{\lambda _1 ,\lambda _2 }$ denote the convex cone of~$
\left( {\lambda _1 ,\lambda _2 } \right) $-nonnegative curvature
operators.~Then $ C_{\lambda _1 ,\lambda _2 }$~is preserved by the
\emph{ODE}~$ {{d\mathcal {R}} \mathord{\left/
 {\vphantom {{d\mathcal {R}} {dt}}} \right.
 \kern-\nulldelimiterspace} {dt}} = \mathcal {R}^2  + \mathcal {R}^\#$.
\end{lemma}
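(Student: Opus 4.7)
My plan is to verify directly that the vector field $F(\mathcal{R}) = \mathcal{R}^2 + \mathcal{R}^\#$ points into (or along) the cone $C_{\lambda_1,\lambda_2}$ at every boundary point, which is the standard criterion for ODE-invariance of a closed convex cone. Concretely, I would show that whenever $\mathcal{R}_0 \in \partial C_{\lambda_1,\lambda_2}$ saturates one of the defining constraints $C_{1,\lambda_1,\lambda_2,\alpha,\beta,\gamma}(\mathcal{R}_0) = 0$ or $C_{2,\lambda_1,\lambda_2,\alpha,\beta}(\mathcal{R}_0) = 0$, the time derivative of that saturated quantity along the ODE at $\mathcal{R}_0$ is nonnegative. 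The decreasing-weight conditions $1 \ge \lambda_1 \ge \lambda_2 \ge 0$ and $\lambda_1 \ge 1-(\lambda_1+\lambda_2)\lambda_2$ built into the definition of $\Lambda$ imply, via a Ky Fan-type characterization of weighted sums of ordered eigenvalues, that $C_{\lambda_1,\lambda_2}$ is closed and convex, so this criterion does apply.

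To compute the relevant derivatives I would fix an orthonormal eigenbasis $\{\omega_\alpha\}_{\alpha=1}^N$ of $\mathcal{R}_0$ with eigenvalues $\mu_1 \le \cdots \le \mu_N$, handling non-simple eigenvalues by a standard density/closedness argument at the end. First-order perturbation of simple eigenvalues gives
\[
\frac{d\mu_\alpha}{dt}\bigg|_{t_0} = \langle\omega_\alpha, F(\mathcal{R}_0)\omega_\alpha\rangle = \mu_\alpha^2 + (\mathcal{R}_0^\#)_{\alpha\alpha},\qquad (\mathcal{R}_0^\#)_{\alpha\alpha} = \tfrac12\sum_{\zeta,\eta}\bigl(c_\alpha^{\zeta\eta}\bigr)^2\mu_\zeta\mu_\eta.
\]
Consequently, at a saturation point of $C_{1,\lambda_1,\lambda_2,\alpha,\beta,\gamma}$ the inequality to be proved becomes
\[
\bigl(\mu_\alpha^2 + \lambda_1\mu_\beta^2 + \lambda_2\mu_\gamma^2\bigr) + \bigl((\mathcal{R}_0^\#)_{\alpha\alpha} + \lambda_1(\mathcal{R}_0^\#)_{\beta\beta} + \lambda_2(\mathcal{R}_0^\#)_{\gamma\gamma}\bigr) \ge 0,
\]
subject to $\mu_\alpha + \lambda_1\mu_\beta + \lambda_2\mu_\gamma = 0$ and $\mathcal{R}_0 \in C_{\lambda_1,\lambda_2}$; an analogous inequality must be verified at saturation points of $C_{2,\lambda_1,\lambda_2,\alpha,\beta}$.

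The diagonal squares are automatically nonnegative and the $\#$-terms $(c_\alpha^{\zeta\eta})^2\mu_\zeta\mu_\eta$ with $\mu_\zeta,\mu_\eta \ge 0$ cause no trouble, so the real work is to dominate the potentially negative cross terms (those involving the few negative $\mu_\zeta$ that can appear on $\partial C_{\lambda_1,\lambda_2}$). Mirroring B\"ohm--Wilking's argument for the special case $(\lambda_1,\lambda_2) = (1,0)$, I would use the saturation relation to eliminate $\mu_\alpha$, pair each negative cross term with appropriate instances of the $(\lambda_1,\lambda_2)$-nonnegativity inequalities at other index triples, and try to rewrite the whole expression as a nonnegative-coefficient combination of $C_{1,\ldots}$ and $C_{2,\ldots}$ evaluations.

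The main obstacle I anticipate lies in this last algebraic step. The boundary of $\Lambda$ is cut out by the relation $1 - (\lambda_1+\lambda_2)\lambda_2 = \lambda_1$, which is precisely the equality case of the second defining inequality of $\Lambda$, and I suspect it is the sharp threshold needed for the coefficients in the regrouping to come out nonnegative throughout the admissible range; verifying this for every triple $(\alpha,\beta,\gamma)$ and every pair $(\zeta,\eta)$ with $c_\alpha^{\zeta\eta} \ne 0$ will likely require a careful case analysis based on the signs of the relevant eigenvalues. This is also where the novelty beyond the B\"ohm--Wilking 2-nonnegativity case lies, since the second family of constraints $C_{2,\lambda_1,\lambda_2,\alpha,\beta}$ must be newly invoked in order to close the estimate when $\lambda_2 > 0$.
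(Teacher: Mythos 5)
Your overall strategy --- verify that $\mathcal{R}^2 + \mathcal{R}^\#$ points into the tangent cone of $C_{\lambda_1,\lambda_2}$ at boundary points --- is exactly the paper's strategy, and the inequality you isolate,
\[
\bigl(\mu_\alpha^2 + \lambda_1\mu_\beta^2 + \lambda_2\mu_\gamma^2\bigr) + \bigl((\mathcal{R}^\#)_{\alpha\alpha} + \lambda_1(\mathcal{R}^\#)_{\beta\beta} + \lambda_2(\mathcal{R}^\#)_{\gamma\gamma}\bigr) \ge 0,
\]
is precisely the quantity the paper estimates (and you are right, and more careful than the paper, that the Ky Fan characterization with the decreasing-weight conditions is what makes $C_{\lambda_1,\lambda_2}$ convex, so the criterion applies). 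But the proposal stops short of the verification itself, which is the entire mathematical content of the lemma, and the place where you ``anticipate the main obstacle'' is exactly the place the paper unlocks with three reductions you do not make. First, since the eigenvalues are ordered and the weight vectors $(1,\lambda_1,\lambda_2)$ and $(\lambda_1, 1-(\lambda_1+\lambda_2)\lambda_2)$ are nonincreasing, the minima of $C_1$ and $C_2$ over all index triples/pairs are attained at $(1,2,3)$ and $(1,2)$ respectively, so only these two constraints can saturate first; you may replace the ``for every triple $(\alpha,\beta,\gamma)$'' quantifier with a single case. Second, one may split off the case $\mu_1 > 0$ (then ordinary nonnegativity of $\mathcal{R}$ is preserved, which trivially implies $(\lambda_1,\lambda_2)$-nonnegativity by the intersection formula~(3)); in the remaining case $\mu_1 \le 0$, the constraint $C_2 \ge 0$ combined with $\lambda_1, 1-(\lambda_1+\lambda_2)\lambda_2 > 0$ forces $\mu_2 \ge 0$, so \emph{at most one} eigenvalue is negative. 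Third, the antisymmetry $c_1^{1\beta} = 0$ means the unweighted family $(c_1^{\alpha\beta})^2\mu_\alpha\mu_\beta$ never sees $\mu_1$ at all; the negative products $\mu_1\mu_\beta$ appear only in the $\lambda_1$- and $\lambda_2$-weighted families, and the paper pairs each with a same-$\beta$ term from the unweighted family to produce blocks such as $(c_1^{2\beta})^2(\mu_2 + \lambda_1\mu_1)\mu_\beta$, $(c_1^{3\beta})^2(\mu_3+\lambda_2\mu_1)\mu_\beta$, and $(c_1^{23})^2(\mu_2\mu_3 + \lambda_1\mu_1\mu_3 + \lambda_2\mu_1\mu_2)$, each of which is seen to be nonnegative using the cone inequalities, $\mu_1 \le 0$, and $1 \ge \lambda_1 \ge \lambda_2$. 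Your plan of ``eliminating $\mu_\alpha$ via saturation and pairing with inequalities at other triples'' would need to be replaced by this structure-constant grouping to close; without the reduction to at most one negative eigenvalue, the case analysis you describe is genuinely intractable rather than merely tedious.
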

\begin{proof}
Note that the convexity and invariance under parallel translation
of~$ C_{\lambda _1 ,\lambda _2 }$~are obvious~(see [6]).~Hence we
only need to show that~$ C_{\lambda _1 ,\lambda _2 }$~is preserved
by the ODE~$ {{d\mathcal {R}} \mathord{\left/
 {\vphantom {{d\mathcal {R}} {dt}}} \right.
 \kern-\nulldelimiterspace} {dt}} = \mathcal {R}^2  + \mathcal {R}^\#$.~

Let~$ \left\{ {\omega _\alpha  } \right\}_{\alpha  = 1}^N$~be an
orthonormal basis of eigenvectors of~$ \mathcal {R}
$~in~$so(n)$~with corresponding eigenvalues~$\mu _1 (\mathcal {R})
\le \mu _2 (\mathcal {R}) \le \cdots  \le \mu _N(\mathcal {R})
$,~where~$ N = {{n\left( {n - 1} \right)} \mathord{\left/
 {\vphantom {{n\left( {n - 1} \right)} 2}} \right.
 \kern-\nulldelimiterspace} 2}$.~If there exists some point~$ x_0 \in M^n $~and time~$ t_0 $~such
that $\mu_1 (\mathcal {R} (x_0, t_0)) > 0$,~and note that it is
obvious that~$ \mathcal {R}^2  + \mathcal {R}^\# $~satisfies the
null-eigenvector assumption~(see [6]),~then by using the maximum
principle Lemma 2.1 we have
$$
0 \le \mu _1 \left( \mathcal {R} (x, t) \right) \le \mu _2 \left(
\mathcal {R} (x, t) \right) \le \cdots \le \mu _N \left( \mathcal
{R} (x, t) \right)
$$
for any~$ (x, t) \in M^n \times \left[ {t_0, T } \right) $,~which is
equivalent to~$ \mathcal {R}\left( t \right) \ge 0$~when~$ t \ge t_0
$.~Then by Theorem 2.2,~we have~$ \mathcal {R}\left( t \right)$~is~$
\left( {\lambda _1 ,\lambda _2 } \right) $-nonnegative,~which
completes the proof.

Hence in the proof we only need to consider the time interval such
that $\mu_1 (\mathcal {R} (x, t)) \le 0$.~If for this case we can
also prove that~$ \mathcal {R}\left( t \right)$~is $ \left( {\lambda
_1 ,\lambda _2 } \right) $-nonnegative,~then by the discussion
before we complete the proof.

Let~$ \mu _\alpha = \mathcal {R}\left( {\omega _\alpha ,\omega
_\alpha } \right) = \mathcal {R}_{\alpha \alpha } $,~then it follows
from~$\mu _1 (\mathcal {R}) \le \mu _2 (\mathcal {R}) \le \cdots \le
\mu _N(\mathcal {R}) $~that~$ \mathcal {R}_{11}  \le \mathcal
{R}_{22} \le \cdots \le \mathcal {R}_{NN}$.~Thus
$$
\begin{array}{l}
 \quad \mathcal {R}_{\alpha \alpha } \left( t \right) + \lambda _1 \mathcal {R}_{\beta \beta } \left( t \right) + \lambda _2 \mathcal {R}_{\gamma \gamma } \left( t \right) \\
\\
  \ge \mathcal {R}_{11} \left( t \right) + \lambda _1 \mathcal {R}_{22} \left( t \right) + \lambda _2 \mathcal {R}_{33} \left( t \right) \\
 \end{array}
$$
and
$$
\begin{array}{l}
 \quad \lambda _1 \mathcal {R}_{\alpha \alpha } \left( t \right) + \left( {1 - \left( {\lambda _1  + \lambda _2 } \right)\lambda _2 } \right)\mathcal {R}_{\beta \beta } \left( t \right) \\
\\
  \ge \lambda _1 \mathcal {R}_{11} \left( t \right) + \left( {1 - \left( {\lambda _1  + \lambda _2 } \right)\lambda _2 } \right)\mathcal {R}_{22} \left( t \right) \\
 \end{array}
$$
for arbitrary~$ t \ge 0 $~and~$ {1 \le \alpha  < \beta  < \gamma \le
N}$.

Hence we only need to prove
$$
   \mathcal {R}_{11} \left( t \right) + \lambda _1 \mathcal {R}_{22} \left( t \right) + \lambda _2 \mathcal {R}_{33} \left( t \right) \ge 0
$$
and
$$
   \lambda _1 \mathcal {R}_{11} \left( t \right) + \left( {1 - \left( {\lambda _1  + \lambda _2 } \right)\lambda _2 } \right)\mathcal {R}_{22} \left( t \right)
  \ge 0
$$
are preserved by the ODE~$ {{d\mathcal {R}} \mathord{\left/
 {\vphantom {{d\mathcal {R}} {dt}}} \right.
 \kern-\nulldelimiterspace} {dt}} = \mathcal {R}^2  + \mathcal {R}^\#$.~This
is equivalent to that~$ R^2  + R^\# $~lies inside the tangent cone
of the convex cone~$ C_{\lambda _1 ,\lambda _2 }$~for~$ R \in
\partial C_{\lambda _1 ,\lambda _2 }$.~Thus we only need to prove the following claim:
\begin{claim}
\emph{(i)}~If
$$
 \mathcal {R}_{\alpha \alpha }  + \lambda _1 \mathcal {R}_{\beta \beta }  +
\lambda _2 \mathcal {R}_{\gamma \gamma }  = 0
$$
and
$$
 \lambda _1 \mathcal {R}_{\alpha \alpha } + \left( {1 - \left( {\lambda
_1 + \lambda _2 } \right)\lambda _2 } \right)\mathcal {R}_{\beta
\beta }  \ge 0
$$
for arbitrary~$ {1 \le \alpha  < \beta  < \gamma  \le N}$~and~$
\left( {\lambda _1 ,\lambda _2 } \right) \in \Lambda $,~then
$$
\left( {\mathcal {R}^2  + \mathcal {R}^\#  } \right)_{1 1}  +
\lambda _1 \left( {\mathcal {R}^2  + \mathcal {R}^\# } \right)_{2 2}
+ \lambda _2 \left( {\mathcal {R}^2  + \mathcal {R}^\#  } \right)_{3
3} \ge 0,
$$
where~$ \left( {\mathcal {R}^2  + \mathcal {R}^\#  } \right)_{\alpha
\alpha } = \left( {\mathcal {R}^2  + \mathcal {R}^\#  } \right)
\left( {\omega _\alpha ,\omega _\alpha } \right) $.
\\

\noindent \emph{(ii)}~If
$$
 \mathcal {R}_{\alpha \alpha }  + \lambda _1 \mathcal {R}_{\beta \beta }  +
\lambda _2 \mathcal {R}_{\gamma \gamma }  \ge 0
$$
and
$$
 \lambda _1 \mathcal {R}_{\alpha \alpha } + \left( {1 - \left( {\lambda
_1 + \lambda _2 } \right)\lambda _2 } \right)\mathcal {R}_{\beta
\beta } = 0
$$
for arbitrary~$ {1 \le \alpha  < \beta  < \gamma  \le N}$~and~$
\left( {\lambda _1 ,\lambda _2 } \right) \in \Lambda $,~then
$$
\lambda _1 \left( {\mathcal {R}^2  + \mathcal {R}^\#  } \right)_{1
1}  + \left( {1 - \left( {\lambda _1  + \lambda _2 } \right)\lambda
_2 } \right)\left( {\mathcal {R}^2 + \mathcal {R}^\#  } \right)_{2
2}  \ge 0.
$$
\end{claim}
\begin{proof}[Proof of Claim 3.3]
(i)~By calculating
$$
\begin{array}{l}
 \quad \left( {\mathcal {R}^2  + \mathcal {R}^\#  } \right)_{11}  + \lambda _1 \left( {\mathcal {R}^2  + \mathcal {R}^\#  } \right)_{22}  + \lambda _2 \left( {\mathcal {R}^2  + \mathcal {R}^\#  } \right)_{33} \\
\\
  = \mu _1^2  + \lambda _1 \mu _2^2  + \lambda _2 \mu _3^2  + 2\sum\limits_{\alpha  < \beta } {\left( {\left( {c_1^{\alpha \beta } } \right)^2  + \lambda _1 \left( {c_2^{\alpha \beta } } \right)^2  + \lambda _2 \left( {c_3^{\alpha \beta } } \right)^2  } \right)} \mu _\alpha  \mu _\beta   \\
 \end{array} \eqno(10)
$$
We only need to prove the right part of~(10)~is nonnegative.~In fact
$$
\begin{array}{l}
 \quad \sum\limits_{\alpha  < \beta } {\left( {\left( {c_1^{\alpha \beta } } \right)^2  + \lambda _1 \left( {c_2^{\alpha \beta } } \right)^2  + \lambda _2 \left( {c_3^{\alpha \beta } } \right)^2 } \right)} \mu _\alpha  \mu _\beta   \\
  = \sum\limits_{2 \le \alpha  < \beta } {\left( {c_1^{\alpha \beta } } \right)^2 } \mu _\alpha  \mu _\beta   + \sum\limits_{1 \le \alpha  < \beta } {\lambda _1 \left( {c_2^{\alpha \beta } } \right)^2 } \mu _\alpha  \mu _\beta   + \sum\limits_{1 \le \alpha  < \beta } {\lambda _2 \left( {c_3^{\alpha \beta } } \right)^2 } \mu _\alpha  \mu _\beta   \\
  = \sum\limits_{\beta  > 2} {\left( {c_1^{2\beta } } \right)^2 } \mu _2 \mu _\beta   + \lambda _1 \sum\limits_{\beta  > 2} {\left( {c_2^{1\beta } } \right)^2 } \mu _1 \mu _\beta   + \lambda _2 \left( {c_3^{12} } \right)^2 \mu _1 \mu _2 \\
  \quad + \lambda _2 \sum\limits_{\beta  > 3} {\left( {c_3^{1\beta } } \right)^2 } \mu _1 \mu _\beta + \lambda _2 \sum\limits_{\beta  > 3} {\left( {c_3^{2\beta } } \right)^2 } \mu _2 \mu _\beta  + \sum\limits_{\beta  > 3} {\left( {c_1^{3\beta } } \right)^2 } \mu _3 \mu _\beta \\
  \quad + \sum\limits_{4 \le \alpha  < \beta } {\left( {c_1^{\alpha \beta } } \right)^2 } \mu _\alpha  \mu_\beta + \lambda _1 \sum\limits_{\beta  > 3} {\left( {c_2^{3\beta } }\right)^2 } \mu _3 \mu_\beta + \lambda _1 \sum\limits_{4 \le \alpha  < \beta } {\left( {c_2^{\alpha \beta } } \right)^2 } \mu _\alpha  \mu _\beta \\
  \quad + \lambda _2 \sum\limits_{4 \le \alpha  < \beta } {\left( {c_3^{\alpha \beta } } \right)^2 } \mu _\alpha  \mu _\beta   \\
  = \sum\limits_{\beta  > 3} {\left( {c_1^{2\beta } } \right)^2 } \left( {\mu _2  + \lambda _1 \mu _1 } \right)\mu _\beta   + \left( {c_1^{23} } \right)^2 \left( {\mu _2 \mu _3  + \lambda _1 \mu _1 \mu _3  + \lambda _2 \mu _1 \mu _2 } \right) \\
   \quad  + \sum\limits_{\beta  > 3} {\left( {c_1^{3\beta } } \right)^2 } \left( {\mu _3  + \lambda _2 \mu _1 } \right)\mu _\beta  + \sum\limits_{\beta  > 3} {\left( {c_2^{3\beta } } \right)^2 } \left( {\lambda _1 \mu _3  + \lambda _2 \mu _2 } \right)\mu _\beta \\
   \quad  + \sum\limits_{4 \le \alpha  < \beta } {\left( {c_1^{\alpha \beta } } \right)^2 } \mu _\alpha  \mu _\beta   + \lambda _1 \sum\limits_{4 \le \alpha  < \beta } {\left( {c_2^{\alpha \beta } } \right)^2 } \mu _\alpha  \mu _\beta   + \lambda _2 \sum\limits_{4 \le \alpha  < \beta } {\left( {c_3^{\alpha \beta } } \right)^2 } \mu _\alpha  \mu _\beta .  \\
 \end{array}
$$
It follows from the definition of~$ \Lambda $~that
$$
 \mu _2  +
\lambda _1 \mu _1 \ge \left( {1 - \left( {\lambda _1 + \lambda _2 }
\right)\lambda _2 } \right)\mu _2 + \lambda _1 \mu _1 \ge 0,
$$
thus
$$
\sum\limits_{\beta  > 3} {\left( {c_1^{2\beta } } \right)^2 } \left(
{\mu _2  + \lambda _1 \mu _1 } \right)\mu _\beta   \ge 0. \eqno(11)
$$
Since
$$
\begin{array}{l}
 \quad \mu _2 \mu _3  + \lambda _1 \mu _1 \mu _3  + \lambda _2 \mu _1 \mu
_2 \\
\\
= \lambda _2 \mu _2 \left( {\mu _1  + \left( {\lambda _1 + \lambda
_2 } \right)\mu _3 } \right) + \mu _3 \left( {\left( {1 - \left(
{\lambda _1  + \lambda _2 } \right)\lambda _2 } \right)\mu _2 +
\lambda _1 \mu _1 } \right)\\
 \ge \lambda _2 \mu _2 \left( {\mu _1  + \lambda _1 \mu _2  +
\lambda _2 \mu _3 } \right) + \mu _3 \left( {\left( {1 - \left(
{\lambda _1  + \lambda _2 } \right)\lambda _2 } \right)\mu _2 +
\lambda _1 \mu _1 } \right)\\
 \ge 0,
 \end{array}
$$
it follows that
$$
\left( {c_1^{23} } \right)^2 \left( {\mu _2 \mu _3 + \lambda _1 \mu
_1 \mu _3  + \lambda _2 \mu _1 \mu _2 } \right) \ge 0.  \eqno(12)
$$

Moreover,~by the discussion at the beginning of the proof,~we only
need to consider the time interval such that~$\mu_1 (\mathcal {R}
(x, t)) \le 0$.~Then it follows from the definition of~$ \left(
{\lambda _1 ,\lambda _2 } \right) $-nonnegative curvature operators
that we actually have
$$
 \mu _1(\mathcal {R}
(x, t)) \le 0 \le \mu _2(\mathcal {R} (x, t)) \le \cdots \le
\mu_N(\mathcal {R} (x, t)) ,
$$
then
$$
 \mu _3 + \lambda _2 \mu _1 \ge \mu _3 + \lambda _1 \mu _1  \ge
\mu _2  + \lambda _1 \mu _1 \ge 0,
$$
which is to say
$$
\sum\limits_{\beta  > 3} {\left( {c_1^{3\beta } } \right)^2 } \left(
{\mu _3  + \lambda _2 \mu _1 } \right)\mu _\beta \ge 0. \eqno(13)
$$
We also have
$$
\begin{array}{l}
 \quad \sum\limits_{\beta  > 3} {\left( {c_2^{3\beta } } \right)^2 } \left( {\lambda _1 \mu _3  + \lambda _2 \mu _2 } \right)\mu _\beta   + \sum\limits_{4 \le \alpha  < \beta } {\left( {c_1^{\alpha \beta } } \right)^2 } \mu _\alpha  \mu _\beta \\
 \quad + \lambda _1 \sum\limits_{4 \le \alpha  < \beta } {\left( {c_2^{\alpha \beta } } \right)^2 } \mu _\alpha  \mu _\beta + \lambda _2 \sum\limits_{4 \le \alpha  < \beta } {\left( {c_3^{\alpha \beta } } \right)^2 } \mu _\alpha  \mu _\beta  \ge 0. \\
 \end{array} \eqno(14)
$$
Hence~(11),~(12),~(13)~and~(14)~lead to
$$
\sum\limits_{\alpha  < \beta } {\left( {\left( {c_1^{\alpha \beta }
} \right)^2  + \lambda _1 \left( {c_2^{\alpha \beta } } \right)^2  +
\lambda _2 \left( {c_3^{\alpha \beta } } \right)^2  } \right)} \mu
_\alpha  \mu _\beta \ge 0
$$.

\noindent (ii)~Let~$ \gamma  = \lambda _1$~and~$ \delta  = 1 -
\left( {\lambda _1  + \lambda _2 } \right)\lambda _2$,~by
calculating
$$
\begin{array}{l}
 \quad \lambda _1 \left( {\mathcal {R}^2  + \mathcal {R}^\#  } \right)_{11}  + \left( {1 - \left( {\lambda _1  + \lambda _2 } \right)\lambda _2 } \right)\left( {\mathcal {R}^2  + \mathcal {R}^\#  } \right)_{22}  \\
  = \gamma \mu _1^2  + \delta \mu _2^2  + 2\sum\limits_{\alpha  < \beta } {\left( {\gamma \left( {c_1^{\alpha \beta } } \right)^2  + \delta \left( {c_2^{\alpha \beta } } \right)^2 } \right)} \mu _\alpha  \mu _\beta   \\
 \end{array} \eqno(15)
$$
We only need to prove the right part of~(15)~is nonnegative.~In fact
$$
\begin{array}{l}
  \quad \sum\limits_{\alpha  < \beta } {\left( {\gamma \left( {c_1^{\alpha \beta } } \right)^2  + \delta \left( {c_2^{\alpha \beta } } \right)^2 } \right)} \mu _\alpha  \mu _\beta  \\
  = \gamma \sum\limits_{2 \le \alpha  < \beta } {\left( {c_1^{\alpha \beta } } \right)^2 } \mu _\alpha  \mu _\beta   + \delta \sum\limits_{1 \le \alpha  < \beta } {\left( {c_2^{\alpha \beta } } \right)^2 } \mu _\alpha  \mu _\beta   \\
  = \gamma \sum\limits_{\beta  > 2} {\left( {c_1^{2\beta } } \right)^2 } \mu _2 \mu _\beta   + \delta \sum\limits_{\beta  > 2} {\left( {c_2^{1\beta } } \right)^2 } \mu _1 \mu _\beta   + \gamma \sum\limits_{3 \le \alpha  < \beta } {\left( {c_1^{\alpha \beta } } \right)^2 } \mu _\alpha  \mu
  _\beta \\
  \quad + \delta \sum\limits_{3 \le \alpha  < \beta } {\left( {c_2^{\alpha \beta } } \right)^2 } \mu _\alpha  \mu _\beta   \\
  = \sum\limits_{\beta  > 2} {\left( {c_1^{2\beta } } \right)^2 } \left( {\gamma \mu _2  + \delta \mu _1 } \right)\mu _\beta   + \gamma \sum\limits_{3 \le \alpha  < \beta } {\left( {c_1^{\alpha \beta } } \right)^2 } \mu _\alpha  \mu _\beta \\
  \quad + \delta \sum\limits_{3 \le \alpha  < \beta } {\left( {c_2^{\alpha \beta } } \right)^2 } \mu _\alpha  \mu _\beta   \\
  = \delta \sum\limits_{\beta  > 2} {\left( {c_1^{2\beta } } \right)^2 } \left( {\frac{\gamma }{\delta }\mu _2  + \mu _1 } \right)\mu _\beta   + \gamma \sum\limits_{3 \le \alpha  < \beta } {\left( {c_1^{\alpha \beta } } \right)^2 } \mu _\alpha  \mu _\beta \\
  \quad + \delta \sum\limits_{3 \le \alpha  < \beta } {\left( {c_2^{\alpha \beta } } \right)^2 } \mu _\alpha  \mu _\beta   \\
 \end{array}
$$
Since~$ \gamma  = \lambda _1  \ge 1 - \left( {\lambda _1  + \lambda
_2 } \right)\lambda _2  = \delta  > 0$,~we have
$$
\begin{array}{l}
 \quad \sum\limits_{\alpha  < \beta } {\left( {\gamma \left( {c_1^{\alpha \beta } } \right)^2  + \delta \left( {c_2^{\alpha \beta } } \right)^2 } \right)} \mu _\alpha  \mu _\beta   \\
 \ge \delta \sum\limits_{\beta  > 2} {\left( {c_1^{2\beta } } \right)^2 } \left( {\frac{\delta }{\gamma }\mu _2  + \mu _1 } \right)\mu _\beta   + \gamma \sum\limits_{3 \le \alpha  < \beta } {\left( {c_1^{\alpha \beta } } \right)^2 } \mu _\alpha  \mu _\beta \\
 \quad + \delta \sum\limits_{3 \le \alpha  < \beta } {\left( {c_2^{\alpha \beta } } \right)^2 } \mu _\alpha  \mu _\beta \\
 \ge 0. \\
 \end{array}
$$
\end{proof}
\end{proof}
Now the Theorem 1.3 follows from Lemma 3.1 and 3.2 easily.
\begin{proof}[Proof of Theorem 1.3]
By using Lemma 3.1 and 3.2,~we can derive directly the weak maximum
principle for~$ \left( {\lambda _1 ,\lambda _2 } \right)
$-nonnegativity,~which completes the proof of Theorem 1.3.
\end{proof}

\section{Proof of the Strong Maximum Principle for~$ \left( {\lambda _1 ,\lambda _2 } \right)
$-Nonnegativity}

\begin{proof}[Proof of Theorem 1.4]
First by Theorem 1.3 we have that~$ \mathcal {R}(g(t)) $~is also $
\left( {\lambda _1 ,\lambda _2 } \right) $-nonnegative for
all~$t>0$.~We will prove~(ii)~firstly.
\\

\noindent \emph{Proof of}~(ii).~As the discussion in the proof of
Theorem 1.3,~we only need to consider the case that~$ \left( {\alpha
,\beta ,\gamma } \right) = \left( {1,2,3} \right)$.~Firstly we prove
the result for~$ C_{1,\lambda _1 ,\lambda _2 ,\alpha ,\beta ,\gamma
} \left( \mathcal {R} \right) $.~Given a smooth nonnegative
function~$ \varphi \left( x \right)$~satisfying
$$
\varphi \left( x \right) \le \frac{{\mu _1 \left( {\mathcal
{R}\left( {x,0} \right)} \right) + \lambda _1 \mu _2 \left(
{\mathcal {R}\left( {x,0} \right)} \right) + \lambda _2 \mu _3
\left( {\mathcal {R}\left( {x,0} \right)} \right)}}{{1 + \lambda _1
+ \lambda _2 }}
$$
for all~$ x \in M^n$.~We also assume that there exists~$ x_0 \in
M^n$~such that
$$
\varphi \left( {x_0 } \right) \ge \frac{{\mu _1 \left( {\mathcal
{R}\left( {x_0 ,0} \right)} \right) + \lambda _1 \mu _2 \left(
{\mathcal {R}\left( {x_0 ,0} \right)} \right) + \lambda _2 \mu _3
\left( {\mathcal {R}\left( {x_0 ,0} \right)} \right)}}{{2\left( {1 +
\lambda _1 + \lambda _2 } \right)}}
$$

Let~$f (x,t)$~be a solution to
$$
 {\frac{{\partial f}}{{\partial t}} =
\Delta f - Af}
$$
such that~$ {f\left( {x,0} \right) = \varphi \left( x \right)}$,~and
define
$$
 {\tilde{\mathcal {R}}} \left( {x,t} \right) = \mathcal
{R}\left( {x,t} \right) + \left( {\varepsilon e^{At}  - f\left(
{x,t} \right)} \right)id_{\Lambda ^2 } \left( x \right)
$$
where~$\varepsilon
> 0$.~Then for \emph{A} sufficiently large and by the Ricci flow equation,~we
can prove that~(see [6])
$$
\frac{\partial }{{\partial t}}{\tilde{\mathcal {R}}}
> \Delta {\tilde{\mathcal {R}}} + {\tilde{\mathcal {R}}}
^2  + {\tilde{\mathcal {R}}}^\# \eqno(16)
$$
for~$ \varepsilon  \in \left( {0,e^{ - AT} } \right]$.~Moreover,
when~$t=0$,~if follows that
$$
{\tilde{\mathcal {R}}} \left( {x,0} \right) = \mathcal {R}\left(
{x,0} \right) + \left( {\varepsilon  - f\left( {x,0} \right)}
\right)id_{\Lambda ^2 } \left( x \right) = \mathcal {R}\left( {x,0}
\right) + \left( {\varepsilon - \varphi \left( x \right)}
\right)id_{\Lambda ^2 } \left( x \right).
$$
Then by using the definition of~$ \varphi \left( x \right) $~we have
$$
\begin{array}{l}
 \quad \left( {\tilde{\mathcal {R}}\left( {x,0} \right)} \right)_{11}  + \lambda _1 \left( {\tilde{\mathcal {R}}\left( {x,0} \right)} \right)_{22}  + \lambda _2 \left( {\tilde{\mathcal {R}}\left( {x,0} \right)} \right)_{33}  \\
\\
  = \mu _1 \left( {\mathcal {R}\left( {x,0} \right)} \right) + \lambda _1 \mu _2 \left( {\mathcal {R}\left( {x,0} \right)} \right) + \lambda _2 \mu _3 \left( {\mathcal {R}\left( {x,0} \right)} \right) - \left( {1 + \lambda _1  + \lambda _2 } \right)\varphi \left( x \right) \\
 \quad + \left( {1 + \lambda _1  + \lambda _2 } \right)\varepsilon
 \\
 > 0, \\
 \end{array}
$$
By using the maximum principle Lemma 2.1 to~(16),~we can get
$$
\left( {\tilde{\mathcal {R}}\left( {x,t} \right)} \right)_{11}  +
\lambda _1 \left( {\tilde{\mathcal {R}}\left( {x,t} \right)}
\right)_{22}  + \lambda _2 \left( {\tilde{\mathcal {R}}\left( {x,t}
\right)} \right)_{33}  \ge 0
$$
for all~$t>0$.~Then taking the limit as~$ \varepsilon  \to 0$,~we
conclude that
$$
\begin{array}{l}
 \mu _1 \left( {\mathcal {R}\left( {x,t} \right)} \right) + \lambda _1 \mu _2 \left( {\mathcal {R}\left( {x,t} \right)} \right) + \lambda _2 \mu _3 \left( {\mathcal {R}\left( {x,t} \right)} \right) \\
  - \left( {1 + \lambda _1  + \lambda _2 } \right)f\left( {x,t} \right)id_{\Lambda ^2 } \left( x \right) \ge 0 \\
 \end{array}
$$
for arbitrary~$ \left( {x,t} \right) \in M^n  \times \left[ {0,T}
\right)$.

On the other hand,~since~$f (x,t)$~is a solution to the parabolic
equation
$$
 {\frac{{\partial f}}{{\partial t}} = \Delta f - Af}
$$
such that~$ {f\left( {x_0,0} \right) = \varphi \left( x_0 \right)}>
0 $,~by the strong maximum principle for the parabolic equation we
have~$f (x,t)>0$~for arbitrary~$ \left( {x,t} \right) \in M^n \times
\left( {0,T} \right)$.~Hence
$$
\mu _1 \left( {\mathcal {R}\left( {x,t} \right)} \right) + \lambda
_1 \mu _2 \left( {\mathcal {R}\left( {x,t} \right)} \right) +
\lambda _2 \mu _3 \left( {\mathcal {R}\left( {x,t} \right)} \right)
> 0
$$
for arbitrary~$ \left( {x,t} \right) \in M^n  \times \left( {0,T}
\right)$.

To prove the similar result for~$ C_{2,\lambda _1 ,\lambda _2
,\alpha ,\beta } \left( \mathcal {R} \right) $,~we only need to
select a smooth nonnegative function~$ \varphi' \left( x
\right)$~satisfying
$$
\varphi' \left( x \right) \le \frac{{\lambda _1 \mu _1 \left(
{\mathcal {R}\left( {x,0} \right)} \right) + \left( {1 - \left(
{\lambda _1  + \lambda _2 } \right)\lambda _2 } \right)\mu _2 \left(
{\mathcal {R}\left( {x,0} \right)} \right)}}{{\lambda _1  + \left(
{1 - \left( {\lambda _1  + \lambda _2 } \right)\lambda _2 }
\right)}}
$$
for all~$ x \in M^n$~and there exists~$ x_1 \in M^n$~such that
$$
\varphi' \left( {x_1 } \right) \ge \frac{{\lambda _1 \mu _1 \left(
{\mathcal {R}\left( {x_1 ,0} \right)} \right) + \left( {1 - \left(
{\lambda _1 + \lambda _2 } \right)\lambda _2 } \right)\mu _2 \left(
{\mathcal {R}\left( {x_1 ,0} \right)} \right)}}{{2\left( {\lambda _1
+ \left( {1 - \left( {\lambda _1  + \lambda _2 } \right)\lambda _2 }
\right)} \right)}}.
$$
We can also select a solution to
$$
 {\frac{{\partial f'}}{{\partial t}} =
\Delta f' - A' f'}
$$
$f'(x,t)$~such that~$ {f'\left( {x,0} \right) = \varphi' \left( x
\right)}$,~and define
$$
 {\tilde{\mathcal {R}}}' \left( {x,t} \right) = \mathcal
{R}'\left( {x,t} \right) + \left( {\varepsilon' e^{A't}  - f'\left(
{x,t} \right)} \right)id_{\Lambda ^2 } \left( x \right)
$$
where~$\varepsilon' > 0$.~Then the left proof is similar to the one
for $ C_{1,\lambda _1 ,\lambda _2 ,\alpha ,\beta ,\gamma } \left(
\mathcal {R} \right) $.
\\

\noindent \emph{Proof of}~(i).~If~$g(0)$~is~$ \left( {\lambda _1
,\lambda _2 } \right) $-nonnegative everywhere in~$ M^n $~and $
\left( {\lambda _1 ,\lambda _2 } \right) $-positive at a point in~$
M^n $,~then by~(ii),~$g(t)$~is~$ \left( {\lambda _1 ,\lambda _2 }
\right) $-positive everywhere for~$ 0 < t < T $.~Otherwise without
loss of generality we only need to consider the case that for some~$
0 < t_0 < T $~such that
$$
\mu _1 \left( {\mathcal {R}\left( {x_0 ,t_0 } \right)} \right) +
\lambda _1 \mu _2 \left( {\mathcal {R}\left( {x_0 ,t_0 } \right)}
\right) + \lambda _2 \mu _3 \left( {\mathcal {R}\left( {x_0 ,t_0 }
\right)} \right) = 0 \eqno(17)
$$
or
$$
\lambda _1 \mu _1 \left( {\mathcal {R}\left( {x_0 ,t_0 } \right)}
\right) + \left( {1 - \left( {\lambda _1  + \lambda _2 }
\right)\lambda _2 } \right)\mu _2 \left( {\mathcal {R}\left( {x_0
,t_0 } \right)} \right) = 0 \eqno(18)
$$
at point~$x_0$,~then we consider these two cases:
\\

\noindent If~(17)~satisfies,~by~(ii),~we have
$$
\mu _1 \left( {\mathcal {R}\left( {x,t} \right)} \right) + \lambda
_1 \mu _2 \left( {\mathcal {R}\left( {x,t} \right)} \right) +
\lambda _2 \mu _3 \left( {\mathcal {R}\left( {x,t} \right)} \right)
= 0
$$
for arbitrary~$ \left( {x,t} \right) \in M^n  \times \left[ {0,t_0 }
\right]$.~We will prove the following result:
$$
\mu _1 \left( {\mathcal {R}\left( {x,t} \right)} \right) = \mu _2
\left( {\mathcal {R}\left( {x,t} \right)} \right) = \mu _3 \left(
{\mathcal {R}\left( {x,t} \right)} \right) = 0 \eqno(19)
$$
for arbitrary~$ \left( {x,t} \right) \in M^n  \times \left[ {0,t_0 }
\right]$.

To prove~(19),~we consider any~$ \left( {x,t} \right) \in M^n \times
\left( {0,t_0 } \right]$.~Let~$ \omega _1 ,\omega _2 $~and~$\omega
_3$

\noindent be unit 2-forms at~$ \left( {x ,t } \right)$,~which are
eigenvectors for~${\mathcal {R}}\left( {x ,t }
\right)$~corresponding~$ \mu _1 \left( {\mathcal {R}}\left( {x ,t }
\right) \right),\mu _2 \left( {\mathcal {R}}\left( {x ,t } \right)
\right)$~and~$\mu _3 \left( {\mathcal {R}}\left( {x ,t } \right)
\right)$~respectively.~Parallel translate~$ \omega _1 ,\omega _2
,$~and~$\omega _3$~along geodesics emanating from~$x$~with respect
to~$g(t)$~to define~$ \omega _1 ,\omega _2 $~and $\omega _3$~in a
space-time neighborhood of~$ \left( {x ,t } \right)$,~where~$ \omega
_1 ,\omega _2 $~and~$\omega _3$ are independent of time (see [8])
.~By matrix analysis (see [8]) we can obtain that for arbitrary~$
\left( {x,t} \right) \in M^n$:
$$
\begin{array}{l}
 \quad \mathcal {R}\left( {\omega _1 ,\omega _1 } \right) + \lambda _1 \mathcal {R}\left( {\omega _2 ,\omega _2 } \right) + \lambda _2 \mathcal {R}\left( {\omega _3 ,\omega _3 } \right) \\
  = \inf \left\{ \begin{array}{l}
 \mathcal {R}\left( {V_i ,V_i } \right) + \lambda _1 \mathcal {R}\left( {V_j ,V_j } \right) + \lambda _2 \mathcal {R}\left( {V_k ,V_k } \right)\left| {V_i  \bot V_j  \bot V_k,} \right. \\
 \left\| {V_i } \right\| = \left\| {V_j } \right\| = \left\| {V_k } \right\| = 1,1 \le i,j,k \le N \\
 \end{array} \right\}, \\
 \end{array} \eqno(20)
$$
where~$ \left\|  \cdot  \right\|$~denotes the metric for the space~$
\Lambda ^2 TM^n $.~Since the curvature operator~$\mathcal {R}$~is
actually a linear operator of~$ \Lambda ^2 TM^n $,~by using
Uhlenbeck trick,~we can prove the metric is invariant along the
Ricci flow~(see [7] for details).~This implies that~$ \omega _1
,\omega _2 $~and~$\omega _3$~are also unit vectors in the space-time
neighborhood of~$ \left( {x ,t } \right)$,~then by~(20)~we have
$$
\begin{array}{l}
 \quad  \left( {\mathcal {R}\left( {\omega _1 ,\omega _1 } \right) + \lambda _1 \mathcal {R}\left( {\omega _2 ,\omega _2 } \right) + \lambda _2 \mathcal {R}\left( {\omega _3 ,\omega _3 } \right)} \right)\left( {x',t'} \right) \\
\\
  \ge \mu _1 \left( {\mathcal {R}\left( {x',t'} \right)} \right) + \lambda _1 \mu _2 \left( {\mathcal {R}\left( {x',t'} \right)} \right) + \lambda _2 \mu _3 \left( {\mathcal {R}\left( {x',t'} \right)} \right) \\
  = 0 \\
 \end{array}
$$
for arbitrary~$ \left( {x',t'} \right) \in M^n  \times \left[ {0,t_0
} \right]$.~On the other hand,~for arbitrary~$ \left( {x,t} \right)
\in M^n \times \left[ {0,t_0 } \right]$,~we have
$$
\begin{array}{l}
 \quad  \left( {\mathcal {R}\left( {\omega _1 ,\omega _1 } \right) + \lambda _1 \mathcal {R}\left( {\omega _2 ,\omega _2 } \right) + \lambda _2 \mathcal {R}\left( {\omega _3 ,\omega _3 } \right)} \right)\left( {x,t} \right) \\
\\
  = \mu _1 \left( {\mathcal {R}\left( {x,t} \right)} \right) + \lambda _1 \mu _2 \left( {\mathcal {R}\left( {x,t} \right)} \right) + \lambda _2 \mu _3 \left( {\mathcal {R}\left( {x,t} \right)} \right) \\
  = 0 \\
 \end{array}
$$
then it follows that at~$ \left( {x ,t } \right)$:
\[
\begin{split}
 0& \ge \frac{\partial }{{\partial t}}\left( {\mathcal {R}\left( {\omega _1 ,\omega _1 } \right) + \lambda _1 \mathcal {R}\left( {\omega _2 ,\omega _2 } \right) + \lambda _2 \mathcal {R}\left( {\omega _3 ,\omega _3 } \right)} \right) \\
  & = \left( {\frac{\partial }{{\partial t}}\mathcal {R}} \right)\left( {\omega _1 ,\omega _1 } \right) + \lambda _1 \left( {\frac{\partial }{{\partial t}}\mathcal {R}} \right)\left( {\omega _2 ,\omega _2 } \right) + \lambda _2 \left( {\frac{\partial }{{\partial t}}\mathcal {R}} \right)\left( {\omega _3 ,\omega _3 } \right) \\
  & = \left( {\Delta \mathcal {R} + \mathcal {R}^2  + \mathcal {R}^\#  } \right)\left( {\omega _1 ,\omega _1 } \right) + \lambda _1 \left( {\Delta \mathcal {R} + \mathcal {R}^2  + \mathcal {R}^\#  } \right)\left( {\omega _2 ,\omega _2 } \right) \\
  & \quad +\lambda _2 \left( {\Delta \mathcal {R} + \mathcal {R}^2  + \mathcal {R}^\#  } \right)\left( {\omega _3 ,\omega _3 } \right) \\
  & = \Delta \left( {\mathcal {R}\left( {\omega _1 ,\omega _1 } \right) + \lambda _1 \mathcal {R}\left( {\omega _2 ,\omega _2 } \right) + \lambda _2 \mathcal {R}\left( {\omega _3 ,\omega _3 } \right)} \right) + \mu _1 \left( \mathcal {R} \right)^2  + \lambda _1 \mu _2 \left( \mathcal {R} \right)^2  \\
  & \quad + \lambda _2 \mu _3 \left( \mathcal {R} \right)^2  + \mathcal {R}^\#  \left( {\omega _1 ,\omega _1 } \right) + \lambda _1 \mathcal {R}^\#  \left( {\omega _2 ,\omega _2 } \right) + \lambda _2 \mathcal {R}^\#  \left( {\omega _3 ,\omega _3 } \right) \\
  & \ge \mu _1 \left( \mathcal {R} \right)^2  + \lambda _1 \mu _2 \left( \mathcal {R} \right)^2  + \lambda _2 \mu _3 \left( \mathcal {R} \right)^2 , \\
 \end{split}
\]
where to obtain the last inequality we used
$$
\sum\limits_{\alpha  < \beta } {\left( {\left( {c_1^{\alpha \beta }
} \right)^2  + \lambda _1 \left( {c_2^{\alpha \beta } } \right)^2  +
\lambda _2 \left( {c_3^{\alpha \beta } } \right)^2 } \right)} \mu
_\alpha  \mu _\beta \ge 0,
$$
and the fact by~(20)~that
$$
 \mathcal {R}\left( {\omega _1 ,\omega _1 }
\right) + \lambda _1 \mathcal {R}\left( {\omega _2 ,\omega _2 }
\right) + \lambda _2 \mathcal {R}\left( {\omega _3 ,\omega _3 }
\right) \ge 0
$$
for arbitrary~$ x' \ne x $,~while at~$ \left( {x ,t } \right)$~is
0.~Hence
$$
\mu _1 \left( {\mathcal {R}\left( {x,t} \right)} \right) = \mu _2
\left( {\mathcal {R}\left( {x,t} \right)} \right) = \mu _3 \left(
{\mathcal {R}\left( {x,t} \right)} \right)  = 0
$$
for arbitrary~$ \left( {x,t} \right) \in M^n  \times \left[ {0,t_0
}\right]$,~which implies that~$ \mathcal {R}\left( {x,t} \right) \ge
0$~for arbitrary~$ \left( {x,t} \right) \in M^n \times \left[ {0,t_0
} \right]$.~Since~$ \mathcal {R}^2  + \mathcal {R}^\# $~satisfies
the null-eigenvector assumption,~then by Theorem 2.1,~$ \mathcal
{R}\left( {x,t} \right) \ge 0$~is preserved under the Ricci
flow,~thus we have~$ \mathcal {R}\left( {x,t} \right) \ge 0$~for
arbitrary~$ \left( {x,t} \right) \in M^n \times \left[ {0,T } \right
)$.
\\

If~(18)~satisfies,~by~(ii),~we also have
$$
\lambda _1 \mu _1 \left( {\mathcal {R}\left( {x,t} \right)} \right)
+ \left( {1 - \left( {\lambda _1  + \lambda _2 } \right)\lambda _2 }
\right)\mu _2 \left( {\mathcal {R}\left( {x,t} \right)} \right) = 0
$$
for arbitrary~$ \left( {x,t} \right) \in M^n  \times \left[ {0,t_0 }
\right]$.~As the proof of~(19)~under the condition~(17),~we consider
any~$ \left( {x,t} \right) \in M^n \times \left( {0,t_0 }
\right]$.~Let~$ \omega _1 $~and~$\omega _2$~be unit 2-forms at~$
\left( {x ,t } \right)$,~which are eigenvectors for~${\mathcal
{R}}\left( {x ,t } \right)$~corresponding~$ \mu _1 \left( {\mathcal
{R}}\left( {x ,t } \right) \right)$~and $\mu _2 \left( {\mathcal
{R}}\left( {x ,t } \right) \right)$ respectively.~Parallel
translate~$ \omega _1 $~and~$\omega _2$

\noindent along geodesics emanating from~$x$~with respect
to~$g(t)$~to define~$ \omega _1 $~and~$\omega _2 $

\noindent in a space-time neighborhood of~$ \left( {x ,t }
\right)$,~where~$ \omega _1 $~and~$\omega _2$~are independent of
time.~By matrix analysis we also have
$$
\begin{array}{l}
 \quad \lambda _1 \mathcal {R}\left( {\omega _1 ,\omega _1 } \right) + \left( {1 - \left( {\lambda _1  + \lambda _2 } \right)\lambda _2 } \right)\mathcal {R}\left( {\omega _2 ,\omega _2 } \right) \\
  = \inf \left\{ {\begin{array}{*{20}c}
   {\lambda _1 \mathcal {R}\left( {V_i ,V_i } \right) + \left( {1 - \left( {\lambda _1  + \lambda _2 } \right)\lambda _2 } \right)\mathcal {R}\left( {V_j ,V_j } \right)\left| {V_i  \bot V_j ,} \right.}  \\
   {\left\| {V_i } \right\| = \left\| {V_j } \right\| = 1,1 \le i,j \le N}  \\
\end{array}} \right\}, \\
 \end{array} \eqno(21)
$$
where~$ \left\|  \cdot  \right\|$~also denotes the metric for the
space~$ \Lambda ^2 TM^n $.~By the same reason after~(20)~we also
have~$ \omega _1 $~and~$ \omega _2 $~are also unit vectors in the
space-time neighborhood of~$ \left( {x ,t } \right)$,~then
by~(21)~we have
$$
\begin{array}{l}
  \quad \left( {\lambda _1 \mathcal {R}\left( {\omega _1 ,\omega _1 } \right) + \left( {1 - \left( {\lambda _1  + \lambda _2 } \right)\lambda _2 } \right)\mathcal {R}\left( {\omega _2 ,\omega _2 } \right)} \right)\left( {x',t'} \right) \\
\\
  \ge \lambda _1 \mu _1 \left( {\mathcal {R}\left( {x',t'} \right)} \right) + \left( {1 - \left( {\lambda _1  + \lambda _2 } \right)\lambda _2 } \right)\mu _2 \left( {\mathcal {R}\left( {x',t'} \right)} \right) \\
  = 0 \\
 \end{array}
$$
for arbitrary~$ \left( {x',t'} \right) \in M^n  \times \left[ {0,t_0
} \right]$.~~On the other hand,~for arbitrary~$ \left( {x,t} \right)
\in M^n \times \left[ {0,t_0 } \right]$,~we have
$$
\begin{array}{l}
  \quad \left( {\lambda _1 \mathcal {R}\left( {\omega _1 ,\omega _1 } \right) + \left( {1 - \left( {\lambda _1  + \lambda _2 } \right)\lambda _2 } \right)\mathcal {R}\left( {\omega _2 ,\omega _2 } \right)} \right)\left( {x,t} \right) \\
\\
  = \lambda _1 \mu _1 \left( {\mathcal {R}\left( {x,t} \right)} \right) + \left( {1 - \left( {\lambda _1  + \lambda _2 } \right)\lambda _2 } \right)\mu _2 \left( {\mathcal {R}\left( {x,t} \right)} \right) \\
  = 0 \\
 \end{array}
$$
then it follows that at~$ \left( {x ,t } \right)$:
\[
\begin{split}
 0& \ge \lambda _1 \left( {\frac{\partial }{{\partial t}}\mathcal {R}} \right)\left( {\omega _1 ,\omega _1 } \right) + \left( {1 - \left( {\lambda _1  + \lambda _2 } \right)\lambda _2 } \right)\left( {\frac{\partial }{{\partial t}}\mathcal {R}} \right)\left( {\omega _2 ,\omega _2 } \right) \\
  & = \lambda _1 \left( {\Delta \mathcal {R} + \mathcal {R}^2  + \mathcal {R}^\#  } \right)\left( {\omega _1 ,\omega _1 } \right) \\
  & \quad + \left( {1 - \left( {\lambda _1  + \lambda _2 } \right)\lambda _2 } \right)\left( {\Delta \mathcal {R} + \mathcal {R}^2  + \mathcal {R}^\#  } \right)\left( {\omega _2 ,\omega _2 } \right) \\
  & = \Delta \left( {\lambda _1 \mathcal {R}\left( {\omega _1 ,\omega _1 } \right) + \left( {1 - \left( {\lambda _1  + \lambda _2 } \right)\lambda _2 } \right)\mathcal {R}\left( {\omega _2 ,\omega _2 } \right)} \right) + \lambda _1 \mu _1 \left( \mathcal {R} \right)^2  \\
  & \quad + \left( {1 - \left( {\lambda _1  + \lambda _2 } \right)\lambda _2 } \right)\mu _2 \left( \mathcal {R} \right)^2  + \lambda _1 \mathcal {R}^\#  \left( {\omega _1 ,\omega _1 } \right) \\
  & \quad + \left( {1 - \left( {\lambda _1  + \lambda _2 } \right)\lambda _2 } \right)\mathcal {R}^\#  \left( {\omega _2 ,\omega _2 } \right) \\
  & \ge \lambda _1 \mu _1 \left( \mathcal {R} \right)^2  + \left( {1 - \left( {\lambda _1  + \lambda _2 } \right)\lambda _2 } \right)\mu _2 \left( \mathcal {R} \right)^2,  \\
 \end{split}
\]
where to obtain the last inequality we used
$$
\sum\limits_{\alpha  < \beta } {\left( {\lambda _1 \left(
{c_1^{\alpha \beta } } \right)^2  + \left( {1 - \left( {\lambda _1 +
\lambda _2 } \right)\lambda _2 } \right)\left( {c_2^{\alpha \beta }
} \right)^2 } \right)} \mu _\alpha  \mu _\beta   \ge 0,
$$
and the fact by~(21)~that
$$
 \lambda _1 \mathcal {R}\left( {\omega _1
,\omega _1 } \right) + \left( {1 - \left( {\lambda _1  + \lambda _2
} \right)\lambda _2 } \right)\mathcal {R}\left( {\omega _2 ,\omega
_2 } \right) \ge 0 $$ for arbitrary~$ x' \ne x $,~while at~$ \left(
{x ,t } \right)$~is 0.~Hence we also derive~(19).~As the proof
of~(19)~under the condition~(17),~we obtain~$ \mathcal {R}\left(
{x,t} \right) \ge 0$~for arbitrary~$ \left( {x,t} \right) \in M^n
\times \left[ {0,T } \right )$.
\end{proof}
Let~$ \left\{ {\omega _\alpha  } \right\}_{\alpha  = 1}^N  = \left\{
{e_i \Lambda e_j } \right\}_{i < j}$~be an orthonormal basis for~$
{so\left( n \right)}$,~where each~$ \alpha $~corresponds to a
pair~$(i,j)$~with~$i < j$.~We then present an interesting property
of~$ \left( {\lambda _1 ,\lambda _2 } \right) $-nonnegative
curvature operators~$ \mathcal {R} $~as following:
\begin{theorem}
The manifolds with~$ \left( {\lambda _1 ,\lambda _2 } \right)
$-nonnegativity have nonnegative scale curvature~$ Scal\left(
\mathcal {R} \right) $,~and with equality if and only if~$\mathcal
{R} = 0$.
\end{theorem}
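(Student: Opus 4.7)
The plan is to exploit identity~(4) from Theorem~2.2, which states
\[
\bigcup_{(\lambda_1,\lambda_2)\in\Lambda} C_{\lambda_1,\lambda_2} = \{\mathcal{R} : \mathcal{R}_{\alpha\alpha} + \mathcal{R}_{\beta\beta} \ge 0,\ \forall\ 1\le \alpha<\beta \le N\},
\]
so that any $(\lambda_1,\lambda_2)$-nonnegative operator is automatically 2-nonnegative and hence satisfies $\mu_\alpha + \mu_\beta \ge 0$ for all distinct $\alpha,\beta$.

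First I would identify the scalar curvature with the trace of the curvature operator. Taking $\{e_i \wedge e_j\}_{i<j}$ as an orthonormal basis of $\Lambda^2 TM^n$, the normalization $\mathcal{R}(X\Lambda Y, Z\Lambda W) = 2\langle Rm(X,Y)W,Z\rangle$ gives $\mathcal{R}(e_i\wedge e_j, e_i\wedge e_j) = 2K(e_i,e_j)$, whence
\[
\sum_{\alpha=1}^{N}\mu_\alpha \;=\; \mathrm{tr}(\mathcal{R}) \;=\; 2\sum_{i<j}K(e_i,e_j) \;=\; Scal(\mathcal{R}).
\]
So bounding $Scal$ reduces to bounding $\sum_\alpha \mu_\alpha$.

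Next, using the ordering $\mu_1 \le \mu_2 \le \cdots \le \mu_N$ together with 2-nonnegativity, I would split into two cases. If $\mu_1 \ge 0$, every eigenvalue is nonnegative, so $Scal \ge 0$, with equality forcing $\mu_\alpha = 0$ for all $\alpha$, i.e.\ $\mathcal{R}=0$. If $\mu_1 < 0$, the inequality $\mu_1 + \mu_\beta \ge 0$ for every $\beta \ge 2$ yields $\mu_\beta \ge -\mu_1 > 0$, hence
\[
Scal \;=\; \mu_1 + \sum_{\beta=2}^{N}\mu_\beta \;\ge\; \mu_1 + (N-1)(-\mu_1) \;=\; -(N-2)\mu_1.
\]
For $n \ge 3$ we have $N = n(n-1)/2 \ge 3$, so the right-hand side is strictly positive and $Scal > 0$ in this case (the case $n=2$ is trivial since then $N=1$ and there is no 2-nonnegativity constraint, but also a single eigenvalue which must be $\ge 0$ by any nonnegativity hypothesis present).

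Combining the two cases, $Scal \ge 0$ always, and the equality $Scal = 0$ excludes $\mu_1 < 0$ by the strict bound above, forcing $\mu_1 \ge 0$; then all eigenvalues are nonnegative and sum to zero, so they must all vanish, yielding $\mathcal{R} = 0$. The only subtlety is the clean identification of $Scal$ with $\mathrm{tr}(\mathcal{R})$ under the chosen normalization; once Theorem~2.2 is invoked to supply 2-nonnegativity, there is no real obstacle, as everything reduces to a linear-algebra argument on the spectrum of $\mathcal{R}$.
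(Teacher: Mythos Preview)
Your argument is correct and proceeds differently from the paper's. The paper sums the inequality $\mathcal{R}_{\alpha\alpha}+\lambda_1\mathcal{R}_{\beta\beta}+\lambda_2\mathcal{R}_{\gamma\gamma}\ge 0$ over all ordered triples of distinct indices to obtain $(N-1)(N-2)(1+\lambda_1+\lambda_2)\,\mathrm{Tr}(\mathcal{R})\ge 0$, and then argues that equality in the total forces each summand to vanish, hence every eigenvalue vanishes. You instead invoke Theorem~2.2 to pass from $(\lambda_1,\lambda_2)$-nonnegativity to $2$-nonnegativity and run a short case split on the sign of $\mu_1$. Your route is cleaner, especially in the equality case, and makes explicit the strict inequality $Scal>0$ whenever $\mu_1<0$; the paper's route is more self-contained (it does not appeal to Theorem~2.2) but tacitly relies on a rearrangement argument to know that the terms for \emph{all} orderings of $(\alpha,\beta,\gamma)$, not only $\alpha<\beta<\gamma$, are nonnegative. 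One cosmetic point: under the paper's stated normalization your computation gives $\mathrm{tr}(\mathcal{R})=Scal$, whereas the paper records $\mathrm{Tr}(\mathcal{R})=\tfrac{1}{2}Scal$; this is a harmless factor of two arising from index conventions and does not affect the sign conclusions.
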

\begin{proof}
We compute
\[
\begin{split}
 Tr\left( \mathcal {R} \right) & =  \sum\limits_{\alpha  = 1}^N {\left\langle {\mathcal {R}\left( {\omega _\alpha  } \right),\omega _\alpha  } \right\rangle } \\
 & = \sum\limits_{i < j} {\left\langle {\mathcal {R}\left( {e_i \Lambda e_j } \right),e_i \Lambda e_j } \right\rangle }  \\
 & = \frac{1}{2}\sum\limits_{i,j} {\mathcal {R}_{ijij} } \\
 & = \frac{1}{2}Tr\left( {Rc\left( \mathcal {R} \right)} \right) \\
 & = \frac{1}{2}Scal\left( \mathcal {R} \right). \\
 \end{split}
\]
On the other hand,~since~$ \mathcal {R} $~is~$ \left( {\lambda _1
,\lambda _2 } \right) $~nonnegative we have
\[
\begin{split}
 0 & \le \sum\limits_{\alpha  \ne \beta  \ne \gamma } {\mathcal {R}_{\alpha \alpha }  + \lambda _1 \mathcal {R}_{\beta \beta }  + \lambda _2 \mathcal {R}_{\gamma \gamma }  }  \\
   & = \left( {N - 1} \right)\left( {N - 2} \right)\left( {1 + \lambda _1  + \lambda _2  } \right)Tr\left( \mathcal {R} \right) \\
   & = \frac{1}{2}\left( {N - 1} \right)\left( {N - 2} \right)\left( {1 + \lambda _1  + \lambda _2  } \right)Scal\left( \mathcal {R} \right), \\
 \end{split}
\]
that is~$ Scal\left( \mathcal {R} \right) \ge 0$.~Hence if~$
Scal\left( \mathcal {R} \right) = 0$,~then it follows from~$ \left(
{\lambda _1 ,\lambda _2 } \right) $-nonnegative~$ \mathcal
{R}_{\alpha \alpha } + \lambda _1 \mathcal {R}_{\beta \beta } +
\lambda _2 \mathcal {R}_{\gamma \gamma } \ge 0$~that~$ \mathcal
{R}_{\alpha \alpha }  + \lambda _1 \mathcal {R}_{\beta \beta } +
\lambda _2 \mathcal {R}_{\gamma \gamma } = 0$,~for all~$ 1 \le
\alpha < \beta < \gamma \le N $.~Since~$ \mathcal {R}_{11} \le 0 \le
\mathcal {R}_{22} \le \cdots \le \mathcal {R}_{NN}$,~we have~$
\mathcal {R}_{\alpha \alpha }  = 0$~for any~$ 1 \le \alpha \le N $.
\end{proof}

\vspace{0.5em}

\noindent \textbf{Acknowledgment} We would especially like to
express our appreciation to professor Ben Chow for longtime
encouragement and meaningful discussions.

\vspace{0.5em}

\end{document}